\documentclass[12pt,a4paper]{article}
\headheight0pt
\headsep0pt
\textheight21.5cm
\textwidth15.5cm
\usepackage{latexsym,amssymb,amsmath,amsthm}
\usepackage{epsfig,graphicx,bm}
\usepackage[latin2]{inputenc}
\usepackage{color}
\usepackage{t1enc}
\usepackage{amssymb}
\usepackage{amsmath}
\newtheorem{thm}{Theorem}%[chapter]

\newtheorem{claim}[thm]{Claim}

\newtheorem{conjecture}[thm]{Conjecture}
\newtheorem{proposition}[thm]{Proposition}

\newtheorem{corollary}[thm]{Corollary}

\DeclareMathOperator{\cl}{cl}

\newcommand{\cP}{\mathcal P}

\title{Line Percolation in Finite Projective Planes}

%\linespread{1.3}
\pagestyle{plain}
\begin{document}

\author{
D\'aniel Gerbner\thanks{Research supported by the J\'anos Bolyai Research Fellowship of the Hungarian Academy of Sciences.} \thanks{Research supported by the National Research, Development and Innovation
Office -- NKFIH under the grant PD 109537.}
\and
Bal\'azs Keszegh\footnotemark[1]  \thanks{Research supported by the National Research, Development and Innovation
Office -- NKFIH under the grant PD 108406.} \footnotemark[4]
\and
G\'abor M\'esz\'aros \thanks{Research supported by the National Research, Development and Innovation
Office -- NKFIH under the grant K 116769.}
\and
Bal\'azs Patk\'os\footnotemark[1]  \thanks{Research supported by the National Research, Development and Innovation
Office -- NKFIH under the grant SNN 116095.}
\and 
M\'at\'e Vizer\footnotemark[5]}

\date{ 
\normalsize MTA R\'enyi Institute, Hungary H-1053, Budapest, Re\'altanoda utca 13-15.\\
\smallskip
\small \texttt{gerbner,keszegh,meszagab,patkos@renyi.hu, vizermate@gmail.com}\\
\smallskip 
\normalsize \today}

\maketitle

\begin{abstract}
%Motivated by the work of Balister, Bollob\'as, Lee, and Narayanan 
We study combinatorial parameters of a recently introduced bootstrap percolation problem in finite projective planes. We present sharp results on the size of the minimum percolating sets and the maximal non-percolating sets. Additional results on the minimal and maximal percolation time as well as on the critical probability in the projective plane are also presented.
\end{abstract}

%%%%%%%%%%%%%%%%%%%%%%%%%%%%%%%%%%%%%%%%%%%%%%%%%%%%%%%%%%%%%%%%%%%%%%%%%%%%%%
%								Introduction
%%%%%%%%%%%%%%%%%%%%%%%%%%%%%%%%%%%%%%%%%%%%%%%%%%%%%%%%%%%%%%%%%%%%%%%%%%%%%%
\section{Introduction}

Bootstrap percolation models have been frequently studied in the past decades. They offer widely utilizable models in different fields such as crack formation, clustering phenomena, the dynamics of glasses and sandpiles, or neural nets and economics \cite{Lee,Peres,Amini}. A new geometric bootstrap percolation model has been recently introduced and studied by Balister, Bollob\'as, Lee, and Narayanan \cite{gridpercolate}. The {\it  $r$-neighbor line percolation model} simulates the spread of an infection on the $d$-dimensional lattice $[n]^d$. The infection is carried by the axis parallel lines that pass through the lattice points of $[n]^d$; more precisely, if an axis parallel line $l$ contains $r$ infected points, then the entire point set of $l$ becomes infected. A subset $A$ of $[n]^d$ is called a {\it percolating set} if, by initially infecting the points of $A$ the infection spreads and infects every lattice point. In \cite{gridpercolate} Balister et al. proved that in the $r$-neighbor line percolation model a percolating set contains at least $r^d$ points. They also determined the order of magnitude of the critical probability (see the precise definition in section 7) of percolation when points of the initially infected sets are selected (independently) randomly with the same probability from $[n]^d$. 

The $r$-neighbor {\it bootstrap percolation models} on graphs has an ample literature. In particular, $r$-neighbor bootstrap percolation on $d$-dimensional complete grids had been studied by Balogh, Bollob\'as, Duminil-Copin, and Morris \cite{alldim} and on  Hamming tori by Gravner, Hoffman, Pfeiffer, and Sivakof \cite{earlier}. Note that the percolation in the above examples is modeled on some underlying graph. The new model of Balister et al. can be described by means of a more general incidence structure (i.e. hypergraph), namely, the incidences of the points and the axis parallel lines of the grid $[n]^d$. 
We generalize the model in \cite{gridpercolate} as follows: let $\mathcal{S}=(\mathcal{P},\mathcal{L}, \mathcal{R})$ be an incidence structure with point set $\mathcal{P}$, line set $\mathcal{L}$, and incidence relation $\mathcal{R}$ . The generalized line percolation model can be described as follows: let $A\subset\mathcal{P}$ be an initially infected set of points. The infection spreads in $\mathcal{S}$ along a line $l\in\mathcal{L}$ if it has at least $r$ infected points; in this case every point on that line becomes infected (we call such a line an "infected line").
That is, we define the sequence of subsets $\{A^{k}\}$  recursively with $A^0=A$ and for $s \ge 1$ let
\[A^{s}= A^{s-1}\cup \{P\in \mathcal{P}: \exists l\in\mathcal{L} ~\text{such that}\ P\in l, \ |l\cap A^{s-1}|\ge r\}.\]
We call the spread of infection from $A^{s-1}$ to $A^s$ {\it round $s$}. We call a set $A^k$ the {\it closure} of $A$ if $A^k=A^{k+1}$; we denote the closure of set $A$ by $\cl(A)$.
We say that a subset $A\subset \mathcal{P}$ percolates if $\cl(A)=\mathcal{P}$. We define the {\it time of the percolation} at the initial percolating set $A$ as the smallest $k\in\mathbb{N}$ for which $A^k=\mathcal{P}$. (For results on the time of percolation in some bootstrap percolation models, see e.g. \cite{maxtime2}, \cite{maxtime1},\cite{time}.) We call a percolating set $A$ minimal if none of its proper subsets percolate.

We are interested in the following parameters: the minimum size of a percolating set, the maximum size of a non-percolating set, the minimum time of percolation for a minimal percolating set, the maximum time of percolation, as well as the critical probability of the  percolation with infection parameter $r$. We denote the parameters by $m_r$, $M_r$, $t_r$, $T_r$, and $p_r$, respectively.

Sometimes it will be useful to consider the following equivalent definition of line percolation: a set $A\subseteq \cal{P}$ percolates if there exists a \textit{percolating sequence $l_1,l_2,\dots,l_{|\mathcal{L}|}$} of lines of $\mathcal{L}$ such that $|(A \cup \bigcup_{j<i}l_j)\cap l_i|\ge r$ for all $i=1,2,\dots, |\mathcal{L}|$. We will refer to this definition as the \textit{one-by-one model}, $l_i$ is the line infected in \textit{step $i$}, and $A_i=(A\cap l_i)\setminus \cup_{j<i}l_j$ is the \textit{part of $A$ needed in step $i$}.

In this work we study the above parameters in finite projective planes of order $q$. We always assume $r\geq 3$ as the cases $r=1,2$ lead to rather straightforward problems for all parameters. It turns out that the parameters show different behavior when the infection parameter is small compared to the order of the plane and when these two parameters are comparable. The exact definitions of "small" and "comparable" are presented at the discussion of the parameters. In general, our observation is that $r\approx \sqrt{2q}$ and $r\approx\frac{q}{2}$ are important milestones where most of the above parameters change behavior.

For arbitrary projective planes of order $q$ we establish the results collected in the table below. Furthermore we obtain stronger bounds for the standard projective (Galois) plane of order $q$ denoted by $PG(2,q)$ and coordinatized by the finite field $\mathbb{F}_q$.

\vspace{1cm}

\[
\begin{array}{|c|l|}
\hline
  \multicolumn{2}{|c|}{  } \\
  \multicolumn{2}{|c|}{ Results \ for \ \Pi_q } \\
  \multicolumn{2}{|c|}{  } \\
\hline \hline
& \\
& \bullet \ \binom{r+1}{2} \le m_r(\Pi_q)\leq (r-1)r+1 \ \ (\textrm{Proposition \ref{minpercupper}, Proposition \ref{maxPercConst}}) \\
m_r & \\
& \bullet \ \textrm{if} \ r<\sqrt{2q}, \ \textrm{then} \ m_r(\Pi_q)=\binom{r+1}{2} \ \ (\textrm{Proposition \ref{minPercConst}}) \\
& \\
& \bullet \ m_r(\Pi_q)= (1-o(1))q^2 \ \textrm{if} \ r=(1-o(1))q \ \textrm{and} \ q\rightarrow\infty \ \ (\textrm{Theorem \ref{minpercoq}})  \\
& \\
\hline
& \\
& \bullet \ q(r-1)+1\leq  M_r(\Pi_q)\leq (q+1)(r-1) \ \ (\textrm{Proposition \ref{gap}})   \\
M_r& \\
& \bullet \ \textrm{if} \ r< \frac{q}{2}+2, \ \textrm{then} \ M_r(\Pi_q)=q(r-1)+1 \ \ (\textrm{Proposition \ref{Mexact}})\\
& \\
\hline
& \\
t_r  & \bullet \ \textrm{for each }\ 4\le r \le \frac{q+7}{3}, \ \textrm{there is a minimal percolating set } A_r \subset \Pi_q \\
&  \hspace{2mm} \textrm{ with } t_r(A_r)=3 \ \ (\textrm{Proposition \ref{existsminperc}})\\
&  \\
\hline
& \\
T_r  & \bullet \ \textrm{if} \ r \ge 5 \textrm{ and } \binom{r}{2}\leq q, \ \textrm{then} \ T_r(\Pi_q) = r+1 \ \ (\textrm{Proposition \ref{maxperctime}, Proposition \ref{maxpercequal}})  \\
& \\
\hline

\end{array}
\]

\vspace{1cm}

In the probabilistic setting we consider the random subset $\Pi_q(p)$ that contains every point $P$ of $\Pi_q$ with probability $p$ independently of all other points. We determine the critical probability of $\Pi_q(p)$ to percolate. We also
consider a random process: if we pick points $P_1,P_2,\dots, P_{q^2+q+1}$ of $\Pi_q$ one-by-one such that $P_i$ is selected uniformly at random from $\mathcal{P}\setminus \{P_1,P_2,\dots, P_{i-1}\}$, then clearly the set $A_i=\{P_1,P_2,\dots, P_i\}$ does not percolate as long as $|A_i\cap l|<r$ for all $l\in\mathcal{L}$. We prove a bottleneck phenomenon: for the smallest index $i$ for which there exists a line $l \in \mathcal{L}$ with $|l\cap A_i|=r$, the set $A_i$ percolates with probability tending to 1 as $q$ tends to infinity.

%%%%%%%%%%%%%%%%%%%%%%%%%%%%%%%%%%%%%%%%%%%%%%%%%%%%%%%%%%%%%%%%%%%%%%%%%%%%%%
%				Folklore results and general observations
%%%%%%%%%%%%%%%%%%%%%%%%%%%%%%%%%%%%%%%%%%%%%%%%%%%%%%%%%%%%%%%%%%%%%%%%%%%%%%
\section{Folklore results and general observations}

We start with the recollection of some well known properties of finite projective planes in general as well as some peculiar characteristics of the standard projective planes. For a more detailed survey of the field we refer the reader to \cite{fingeosurvey}.
We denote an arbitrary finite projective plane of order $q$ ($q\geq 2$) by $\Pi_q$. Recall that $\Pi_q$ contains $q^2+q+1$ points and $q^2+q+1$ lines, that is, $|\mathcal{P}|=|\mathcal{L}|=q^2+q+1$. Every line $l$ contains $q+1$ points and every point is contained in $q+1$ lines; every pair of lines intersects in a unique point and every pair of points is contained in a unique line.

The dual plane $\overline{\Pi_q}=(\overline{\mathcal{P}},\overline{\mathcal{L}}, \overline{\mathcal{R}})$ of the finite plane $\Pi_q=(\mathcal{P},\mathcal{L}, \mathcal{R})$ is defined as follows: $\overline{\mathcal{P}}=\mathcal{L}$, $\overline{\mathcal{L}}=\mathcal{P}$, and $\overline{\mathcal{R}}=\mathcal{R}$ in the sense that a point $P$ and a line $l$ are incident in one plane if and only if their line-point pair is incident in the other. We mention that some projective planes - such as the standard plane - are self-dual, meaning that $\Pi_q$ and $\overline{\Pi_q}$ are isomorphic.

A set of $k$ points forms a {\it $k$-arc} if no three of them are collinear (i.e. contained in a line). An arc of a finite projective plane of order $q$ contains at most $q+2$ points if $q$ is even and at most $q+1$ points if $q$ is odd. Arcs containing $q+1$ and $q+2$ points are called {\it ovals} and ${\it hyperovals}$, respectively. Ovals and hyperovals can be easily found in standard projective planes. On the other hand, the existence of ovals and hyperovals in general planes is a longstanding open question. For general planes the best known bound is the following: 

\begin{proposition}\label{minarc}
If $S$ is an arc in $\Pi_q$ containing $k$ points such that $S$ is not contained by any other arc of the plane, then $q < \binom{k}{2}$.
\end{proposition}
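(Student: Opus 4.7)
The plan is a standard double-counting argument based on the observation that $S$ is maximal as an arc if and only if every point outside $S$ lies on at least one \emph{secant} (a line joining two points of $S$).

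First I would set up the basic counts. Since $S$ is a $k$-arc, no three of its points are collinear, so every secant meets $S$ in exactly two points, and the total number of secants is $\binom{k}{2}$. Each secant is a line of $\Pi_q$, hence contains $q+1$ points in total, i.e. $q-1$ points lying outside $S$. The number of points outside $S$ is $q^2+q+1-k$.

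Next I would use maximality. If a point $P\notin S$ were on no secant, then $S\cup\{P\}$ would still be an arc (any three points including $P$ and two of $S$ would lie on a secant through $P$ and one point of $S$, which would meet $S$ in a single point by our assumption), contradicting maximality of $S$. Hence every point outside $S$ is covered by some secant. Counting the pairs (point outside $S$, secant through it) in two ways gives the inequality
\[
q^{2}+q+1-k \;\le\; \binom{k}{2}(q-1).
\]

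The final step is a short contradiction. Suppose for contradiction that $\binom{k}{2}\le q$. Then the right-hand side is at most $q(q-1)=q^{2}-q$, so
\[
q^{2}+q+1-k \;\le\; q^{2}-q, \qquad\text{i.e.,}\qquad k\ge 2q+1.
\]
But $k\ge 2q+1$ gives $\binom{k}{2}\ge \binom{2q+1}{2}=q(2q+1)>q$, contradicting $\binom{k}{2}\le q$. Therefore $\binom{k}{2}>q$, as claimed.

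The argument is very short and no step is a real obstacle; the only subtlety worth checking is the maximality equivalence in the second paragraph, namely that "not extendable" is really equivalent to "every outside point lies on a secant," which follows immediately from the arc definition.
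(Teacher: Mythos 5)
Your proof is correct. Note that the paper does not actually prove Proposition \ref{minarc}; it is quoted as a known fact from finite geometry, and your double count is precisely the standard argument for it: maximality forces every point off $S$ onto one of the $\binom{k}{2}$ secants, each of which carries exactly $q-1$ points outside $S$, and comparing the two counts gives the bound. Two minor remarks. The parenthetical justifying the maximality step is garbled as written; the clean statement is simply that if $P\notin S$ lay on no secant, then no line would contain $P$ together with two points of $S$, so $S\cup\{P\}$ would be a larger arc. Also, the concluding case analysis can be avoided: since each point of $S$ lies on $k-1\ge 1$ secants as well, the $\binom{k}{2}$ secants cover all $q^2+q+1$ points of the plane, whence $\binom{k}{2}(q+1)\ge q^2+q+1>q(q+1)$ and $q<\binom{k}{2}$ follows directly.
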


A set of $k$ lines is called {\it lines in general position} if no three of them intersect at the same point. Obviously, every set of $k$ lines in general position in $\Pi_q$ corresponds to a $k$-arc in $\overline{\Pi_q}$, thus the above results concerning the possible size of arcs can be naturally translated to the size of sets of lines in general position.

We close up this section with some general observations that will be frequently used in our later proofs.

\begin{proposition}\label{notpercolate}
If $k\leq r-1$ and $A\subset\cup_{i=1}^k l_i$ for $l_i\in \mathcal{L}$, then $A$ does not percolate.
\end{proposition}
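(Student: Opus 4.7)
The plan is to show that the union $U := \bigcup_{i=1}^k l_i$ is invariant under the infection process, so that $\cl(A) \subseteq U$, and then observe that $U$ cannot exhaust $\mathcal{P}$.

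I would prove the invariance $A^s \subseteq U$ by induction on $s$. The base case $s=0$ is the hypothesis. For the inductive step, suppose some line $\ell \in \mathcal{L}$ becomes infected in round $s+1$, so that $|\ell \cap A^s| \ge r$. If $\ell \in \{l_1,\dots,l_k\}$ then the newly infected points lie on $\ell \subseteq U$ and there is nothing to do. Otherwise $\ell$ is distinct from each $l_i$, so by the projective-plane axioms it meets every $l_i$ in a unique point, giving
\[
|\ell \cap A^s| \;\le\; |\ell \cap U| \;\le\; k \;\le\; r-1,
\]
which contradicts that $\ell$ got infected. Hence $A^{s+1} \subseteq U$, and by induction $\cl(A) \subseteq U$.

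It then remains to check that $U$ is a proper subset of $\mathcal{P}$. A union of $k$ lines in $\Pi_q$ covers at most $k q + 1$ points (each new line contributes at most $q$ further points, as it meets any previously counted line in at least one point), so $|U| \le (r-1)q + 1$, strictly less than $|\mathcal{P}| = q^2+q+1$ whenever $r \le q+1$. In the degenerate range $r \ge q+2$ no line of $\Pi_q$ can contain $r$ points at all, so $\cl(A)=A \subseteq U \subsetneq \mathcal{P}$ directly. Either way, $A$ does not percolate.

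There is no real obstacle; the argument rests on the single projective-plane observation that a line not in $\{l_1,\dots,l_k\}$ intersects $U$ in at most $k \le r-1$ points, which falls short of the infection threshold. The only care needed is to distinguish the trivial case $\ell \in \{l_1,\dots,l_k\}$ in the inductive step and to note the degenerate range $r > q+1$ separately.
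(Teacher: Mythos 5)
Your argument is essentially the paper's own proof: both reduce to the observation that any line outside $\{l_1,\dots,l_k\}$ meets $\bigcup_{i=1}^k l_i$ in at most $k\le r-1$ points, so the union is closed under the infection process. Your version just packages this as an induction on rounds and adds the (omitted in the paper, but easy) check that $k\le r-1$ lines cannot cover $\Pi_q$; the proposal is correct.
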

\begin{proof}
It is sufficient to show that the set $A'= \cup_{i=1}^k l_i$ does not percolate; we in fact show that $A'^1=A$. Assume by contradiction that a line $l$ gets infected in the first round. It implies that $|l\cap \cup_{i=1}^k l_i|\geq r$. Note on the other hand that $l\neq l_i$ thus $|l\cap l_i| = 1$ and so $|l\cap \cup_{i=1}^k l_i|\leq k\leq r-1$, contradicting our assumption.
\end{proof}
\begin{proposition}\label{r-broom}
If a set of points $A$ contains the point set of $r$ lines intersecting at a single point $P$ (an "$r$-broom"), then $A$ percolates.
\end{proposition}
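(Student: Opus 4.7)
The plan is to show that a single round of infection already suffices: after round one, every point of $\Pi_q$ lies in $A^1$. So the proof reduces to analyzing how an arbitrary line $m \in \mathcal{L}$ meets the broom $\bigcup_{i=1}^r l_i \subseteq A$, where $l_1,\dots,l_r$ are the $r$ lines through $P$.

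I would split into cases on whether $m$ passes through $P$. If $m$ does not pass through $P$, then for each $i$ the intersection $m\cap l_i$ is a single point, and these $r$ points are pairwise distinct: otherwise two distinct lines $l_i,l_j$ would share a point off $P$, contradicting the fact that two lines in a projective plane meet in a unique point. Hence $|m\cap A|\ge r$, so $m$ becomes infected in round $1$. The lines $m$ through $P$ (including the $l_i$ themselves) do not necessarily get infected right away, but they are irrelevant for what follows.

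Now I would observe that every point $Q\ne P$ lies on at least one line that avoids $P$ (since $Q$ is on $q+1$ lines and only one of them, namely $PQ$, passes through $P$, leaving $q\ge 2$ other lines). By the first step, every such line was infected in round $1$, so $Q\in A^1$. Combined with $P\in l_1\subseteq A\subseteq A^1$, this gives $A^1=\mathcal{P}$, so $A$ percolates (in one round).

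There is essentially no obstacle here; the only point worth stating carefully is the distinctness of the intersection points $m\cap l_i$ when $m\not\ni P$, which follows immediately from the uniqueness of the line joining two points in $\Pi_q$. No projective-plane identities beyond the basic incidence axioms are needed.
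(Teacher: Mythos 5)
Your argument is correct and is essentially the paper's own proof: the paper likewise observes that every line not through $P$ meets the $r$-broom in $r$ distinct points and hence is infected in the first round, which covers all of $\mathcal{P}$. You simply spell out the distinctness of the intersection points and the fact that every point other than $P$ lies on such a line, which the paper leaves implicit.
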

\begin{proof}
Every line $l$ not containing $P$ intersects the $r$-broom at $r$ different points and thus gets infected in $A^1$.  
\end{proof}
\begin{proposition}\label{r-lines}
If $\binom{r}{2}\leq q$ and $A^i$ contains $r$ infected lines for some $i\in \mathbb{N}$, then $A^{i+1}=\mathcal{P}$.
\end{proposition}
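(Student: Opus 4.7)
The plan is to fix an arbitrary point $P\in\mathcal{P}$ and show that either $P$ already lies on one of the $r$ infected lines or some line through $P$ meets $\bigcup_{j=1}^{r}\ell_j$ in at least $r$ (distinct) infected points, where $\ell_1,\dots,\ell_r$ denote the $r$ infected lines inside $A^i$. Since any line in $\Pi_q$ meets each $\ell_j$ in exactly one point, a line $m$ through $P$ sees exactly $r$ points of $\bigcup_{j}\ell_j$ unless two of these intersection points coincide, i.e.\ unless $m$ passes through a point $Q$ lying on at least two of the $\ell_j$'s.

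Now count: the pairwise intersections of $\ell_1,\dots,\ell_r$ form a set of at most $\binom{r}{2}$ points. If $P\notin \bigcup_{j}\ell_j$, then every such intersection point $Q$ determines a unique line $PQ$ through $P$, so at most $\binom{r}{2}$ lines through $P$ are ``bad''. The total number of lines through $P$ is $q+1$, and by assumption $q+1\ge \binom{r}{2}+1$, so a ``good'' line $m$ through $P$ exists. That line contains $r$ distinct points of $\bigcup_j\ell_j\subseteq A^i$, hence $m$ becomes infected during round $i+1$ and $P\in A^{i+1}$.

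Since $P$ was arbitrary, $A^{i+1}=\mathcal{P}$. The whole argument rests on the trivial counting of bad lines through $P$, so there is no real obstacle; the only thing to verify carefully is that the hypothesis $\binom{r}{2}\le q$ is exactly what makes $q+1>\binom{r}{2}$ hold, ensuring the existence of a good line through every non-infected point.
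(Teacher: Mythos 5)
Your proof is correct and follows the same approach as the paper: for a point $P$ not yet infected, count the at most $\binom{r}{2}$ lines through $P$ that hit a pairwise intersection of the infected lines, and use $q+1>\binom{r}{2}$ to find a line through $P$ meeting the union in $r$ distinct infected points. You simply make explicit the counting that the paper leaves implicit in the phrase ``using $\binom{r}{2}\leq q$''.
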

\begin{proof}
Let $l_1,\dots,l_r$ be the $r$ infected lines and let $P\in \mathcal{P}$ not infected at the end of round $i$. Then, using $\binom{r}{2}\leq q$, $P$ is on a line $l$ not containing any intersection $l_i\cap l_j \ (1 \le i < j \le r)$, thus $|l\cap\bigcup_{i=1}^r l_i|= r$, hence $l$ gets infected in round $i+1$ and so does $P$.
\end{proof}
%%%%%%%%%%%%%%%%%%%%%%%%%%%%%%%%%%%%%%%%%%%%%%%%%%%%%%%%%%%%%%%%%%%%%%%%%%%%%%
%							Minimal percolating sets
%%%%%%%%%%%%%%%%%%%%%%%%%%%%%%%%%%%%%%%%%%%%%%%%%%%%%%%%%%%%%%%%%%%%%%%%%%%%%%
\section{Minimal percolating sets}
Proposition \ref{r-broom} immediately implies the following rather obvious upper bound: 
\begin{proposition}\label{minpercupper}
	$m_r(\Pi_q)\leq (r-1)r+1.$
\end{proposition}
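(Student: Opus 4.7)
The plan is to observe that although an $r$-broom itself has size $r(q+1)-(r-1) = rq+1$, we do not need the whole broom to be present \emph{initially}; it suffices to seed just enough points on $r$ concurrent lines so that each such line becomes fully infected in the first round, after which Proposition \ref{r-broom} closes the argument.

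Concretely, I would fix a point $P\in\mathcal{P}$ and choose any $r$ of the $q+1$ lines through $P$, call them $l_1,\dots,l_r$ (valid since $r\le q+1$, which we may assume, as otherwise no line can ever contain $r$ infected points and $\Pi_q$ is not percolatable at all). On each $l_i$ I would select $r-1$ points distinct from $P$ (possible because $|l_i|=q+1\ge r$). Let $A$ be the union of $\{P\}$ with these $r(r-1)$ extra points. Because any two distinct lines through $P$ meet \emph{only} in $P$, the $r(r-1)$ extra points are pairwise distinct and none of them equals $P$, so $|A|=r(r-1)+1$.

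Now each $l_i$ contains exactly $r$ points of $A$, namely $P$ together with the $r-1$ points picked on $l_i$. Hence every $l_i$ gets infected in the first round, i.e.\ $l_i\subseteq A^1$ for $i=1,\dots,r$. Consequently $A^1$ contains the full $r$-broom $\bigcup_{i=1}^r l_i$ centered at $P$, and Proposition \ref{r-broom} applied to $A^1$ (or directly, since infection is monotone) gives $\cl(A)=\mathcal{P}$. Thus $A$ is a percolating set of the claimed size, proving $m_r(\Pi_q)\le (r-1)r+1$.

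There is essentially no obstacle here: the only thing one has to notice is the distinctness of the chosen points, which is immediate from the defining property of a projective plane that two distinct lines share a unique point. This is why the excerpt calls the bound an \emph{immediate} consequence of Proposition \ref{r-broom}.
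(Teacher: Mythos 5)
Your proof is correct and is exactly the argument the paper intends: the paper gives no explicit proof, merely noting the bound follows immediately from Proposition \ref{r-broom}, and your construction (seed $P$ plus $r-1$ further points on each of $r$ concurrent lines, so each line reaches $r$ infected points in round one and the full $r$-broom appears in $A^1$) is the standard way to realize that implication.
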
 
We believe that the presented bound is not sharp for $r\geq 3$. We first establish a general lower bound on $m_r(\Pi_q)$.

\begin{proposition}\label{maxPercConst}
	$m_r(\Pi_q)\geq \binom{r+1}{2}$. Moreover, if $m_r(\Pi_q) = \binom{r+1}{2}$, then the points of $A$ are contained in the union of $r$ lines in general position. 
\end{proposition}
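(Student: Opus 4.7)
The plan is to exploit the one-by-one model set up in the excerpt. Let $A$ be any percolating set and fix a percolating sequence $l_1,l_2,\dots,l_{q^2+q+1}$ of lines, writing $A_i=(A\cap l_i)\setminus\bigcup_{j<i}l_j$ for the part of $A$ needed at step $i$. Since the sequence exhausts $\mathcal{L}$, every point of $A$ lies on some $l_i$, and it belongs to the unique $A_i$ with smallest index; thus the $A_i$ partition $A$ and $|A|=\sum_i|A_i|$.

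The condition for $l_i$ to be infected at step $i$ is $|A_i|+p_i\geq r$, where
$$p_i=\left|\bigcup_{j<i}(l_j\cap l_i)\right|$$
is the number of distinct points on $l_i$ already covered by earlier lines. Since any two distinct lines of $\Pi_q$ meet in exactly one point, $p_i\leq i-1$, so $|A_i|\geq\max(0,\,r-i+1)$. Summing gives
$$|A|=\sum_{i\geq 1}|A_i|\;\geq\;\sum_{i=1}^{r}(r-i+1)=\binom{r+1}{2},$$
which is the desired lower bound.

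For the moreover part, equality forces $|A_i|=r-i+1$ for $1\leq i\leq r$, $|A_i|=0$ for $i>r$, and $p_i=i-1$ for all $1\leq i\leq r$. Since $A_i\subseteq l_i$ and the $A_i$ partition $A$, the first two conditions give $A\subseteq l_1\cup\dots\cup l_r$. The condition $p_i=i-1$ says that for each $i$ the intersection points $l_j\cap l_i$ ($j<i$) are pairwise distinct on $l_i$; equivalently, no two earlier lines cross $l_i$ at a common point, so no three of $l_1,\dots,l_r$ are concurrent. Hence $l_1,\dots,l_r$ are $r$ lines in general position containing $A$.

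The argument is essentially careful bookkeeping in the one-by-one model, and the only thing to watch is the equality analysis: making sure the saturation $p_i=i-1$ for every $i\leq r$ translates precisely into the general-position property of $l_1,\dots,l_r$, rather than just into some weaker "no concurrence along $l_r$" statement.
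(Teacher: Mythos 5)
Your proof is correct and follows essentially the same route as the paper: the one-by-one model, the bound $|A_i|\ge r-i+1$ from the fact that $l_i$ meets each earlier line in a single point, and the equality analysis forcing the $r$ intersection points to be distinct, hence the lines in general position. The extra bookkeeping you supply (the partition $|A|=\sum_i|A_i|$ and the saturation $p_i=i-1$) is exactly what the paper leaves implicit.
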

\begin{proof}
We consider the one-by-one model and observe that if $A$ percolates, then for any percolating sequence of lines for $i=1,2\dots,r$ the part $A_i$ of $A$ needed at step $i$ has size at least $r-i+1$, hence $|A|\geq\binom{r+1}{2}$.
	
It immediately follows that if $m_r(\Pi_q) = \binom{r+1}{2}$ and $A$ is a smallest percolating set with percolating sequence $l_1,l_2,\dots$, then $|A_i|=r-i+1$ and all intersection points $l_i\cap l_j$ ($1\le i<j\le r$) are distinct. Therefore the lines $l_1,l_2,\dots, l_{r}$ form a set of $r$ lines in general position.
\end{proof}
Proposition \ref{maxPercConst} shows that if $m_r(\Pi_q) = \binom{r+1}{2}$ holds, the plane $\Pi_q$ must contain at least $r$ lines in general position. It is only known that such a set of lines exists in every projective plane for $r\leq \sqrt{2q}$ (Proposition \ref{minarc}). We show, on the other hand, that a sufficiently large set of lines in general position can provide a percolating set of size $\binom{r+1}{2}$, thus the bound of Proposition \ref{maxPercConst} is sharp in this case:

\begin{proposition}\label{dual}
Suppose that there exists a set $L$ of $k\geq 2r$ lines in general position in $\Pi_q$. In this case we have $m_r(\Pi_q)=\binom{r+1}{2}$.
\end{proposition}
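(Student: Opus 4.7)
The lower bound $m_r(\Pi_q) \ge \binom{r+1}{2}$ is already given by Proposition \ref{maxPercConst}, so my plan is to exhibit a percolating set $A$ of exactly this size from the hypothesis. The idea is to use the first $r$ of the given $2r$ lines in general position as the ``backbone'' of the percolation (as forced by Proposition \ref{maxPercConst}) and the remaining $r$ lines as a reservoir which guarantees that, once the backbone is infected, the infection spreads to the entire plane.

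Concretely, fix $2r$ lines $l_1, \ldots, l_{2r}$ inside $L$ and, for each $i = 1, \ldots, r$, let $A_i$ be the $r - i + 1$ intersection points $l_i \cap l_j$ with $j \in \{r+1, \ldots, 2r - i + 1\}$; set $A = \bigcup_{i=1}^{r} A_i$. Since no three of the $l_i$ are concurrent, the $A_i$ are pairwise disjoint, so $|A| = r + (r-1) + \cdots + 1 = \binom{r+1}{2}$. In the one-by-one model the line $l_i$ (for $i \le r$) becomes infected at step $i$, because it contains the $i-1$ intersections with the already infected $l_1, \ldots, l_{i-1}$ together with the $r - i + 1$ points of $A_i$, a total of exactly $r$. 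Once $l_1, \ldots, l_r$ are infected, each $l_j$ with $r < j \le 2r$ follows automatically, as it meets $l_1 \cup \cdots \cup l_r$ in $r$ distinct points by general position.

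The main step is to show that once $l_1, \ldots, l_{2r}$ are infected, every remaining point $P$ is reached. Each of the $q + 1$ lines through $P$ meets each $l_i$ in exactly one point, and since no three $l_i$ are concurrent, two of these $2r$ meetings coincide on a line $\ell \ni P$ precisely when some $l_i \cap l_j$ lies on $\ell$. Letting $a(\ell)$ count such pairs, one has $|\ell \cap \bigcup_i l_i| = 2r - a(\ell)$, and summing over the $q+1$ lines through $P$ yields $\sum_{\ell \ni P} a(\ell) = \binom{2r}{2} = r(2r-1)$, because each of the intersection points lies on a unique line through $P$. The key numerical input is $q \ge 2r - 2$, which is automatic: the dual of the $2r$ lines is a $2r$-arc in $\overline{\Pi_q}$, and the folklore bound $k \le q + 2$ on arc sizes forces $2r \le q + 2$. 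Averaging then produces a line $\ell$ through $P$ with $a(\ell) \le r(2r-1)/(q+1) \le r$, hence $|\ell \cap \bigcup_i l_i| \ge r$, so $\ell$ becomes infected and drags $P$ along. The delicate point I expect to handle carefully is the tightness of this averaging at the extreme case $q = 2r - 2$, but the integrality of $a(\ell)$ still guarantees $a(\ell) \le r$ on some line, closing the argument.
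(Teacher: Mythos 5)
Your proof is correct, and its skeleton is the same as the paper's: infect $l_1,\dots,l_r$ one by one, placing $r-i+1$ fresh points of $A$ on $l_i$ so that step $i$ uses exactly the $i-1$ already-infected intersection points plus $A_i$, then observe that $l_{r+1},\dots,l_{2r}$ and all remaining lines come for free. (The paper takes $A_i$ to be an arbitrary $(r-i+1)$-subset of $l_i\setminus\bigcup_{j<i}l_j$ rather than your specific intersection points with the reserve lines; both choices work equally well.) Where you genuinely diverge is the final step. The paper disposes of it in one line: since no point lies on three lines of $L$, \emph{every} line $\ell\notin L$ meets $\bigcup_{i=1}^{2r}l_i$ in at least $2r/2=r$ distinct points, so every remaining line is infected immediately once $L$ is. Your averaging over the pencil of lines through an uninfected point $P$, together with the arc-size bound $2r\le q+2$ to guarantee $q\ge 2r-2$, is valid but is a detour: your own identity $\lvert \ell\cap\bigcup_i l_i\rvert = 2r-a(\ell)$ already gives the pointwise bound, because each of the $2r-a(\ell)$ intersection points absorbs at most two of the $2r$ meetings, whence $2r\le 2\bigl(2r-a(\ell)\bigr)$ and so $a(\ell)\le r$ for every line through $P$, not merely an average one. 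So the averaging and the appeal to the maximal arc size buy nothing here; the direct double count is both simpler and strictly stronger.
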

\begin{proof}
Let $L=\{l_1,\dots ,l_{k}\}$ and consider any sequence of all lines of $\cal{L}$ that starts with $l_1,l_2,\dots,l_k$. We construct a percolating set $A$ with the above percolating sequence by defining the parts $A_i$ needed at step $i$. As $L$ is a set of lines in general position, we have $|l_i\cap (\cup_{j<i}l_j)|=i-1$, therefore for $i\le r$ let $A_i$ be any $(r-i+1)$-subset of $l_i\setminus (\cup_{j<i}l_j)$.
We claim that for $i> r$ we can let $A_i$ be empty. Indeed, if $r< i\le k$, then as $L$ is a set of lines in general position, we have that $l_i\cap (\cup_{j<i}l_j)\supseteq l_i\cap (\cup_{j\le r}l_j)$ has size at least $r$. While if $i> k$, then, again using that $L$ is a set of lines in general position, we have that $|l_i\cap (\cup_{j<i}l_j)|\ge |l_i\cap (\cup_{j\le k}l_j)|\ge k/2\ge r$.
\end{proof}

\begin{corollary}\label{m_r} We have the following:

    \begin{itemize}
		\item[i)] $m_r(\Pi_q)=\binom{r+1}{2}$ for $r< \sqrt{\frac{q}{2}},$
		
		\item[ii)] $m_r(PG(2,q))=\binom{r+1}{2}$ for $r\leq\lfloor\frac{q+1}{2}\rfloor$.
	\end{itemize}
\end{corollary}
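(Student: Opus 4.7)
The strategy is to combine the lower bound $m_r(\Pi_q)\ge\binom{r+1}{2}$ of Proposition \ref{maxPercConst} with the upper bound coming from Proposition \ref{dual}. In both parts of the corollary the only real task is to exhibit $2r$ lines in general position in the plane under consideration, for then Proposition \ref{dual} immediately yields the matching upper bound $m_r(\Pi_q)\le\binom{r+1}{2}$.

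For part (i), working in an arbitrary $\Pi_q$, I pass to the dual plane $\overline{\Pi_q}$, in which $2r$ lines in general position correspond to a $2r$-arc. I build such an arc greedily: starting from any $2$-arc, I extend one point at a time. By the contrapositive of Proposition \ref{minarc} (applied to $\overline{\Pi_q}$, which is again a projective plane of order $q$), any arc of size $k$ with $\binom{k}{2}\le q$ is not maximal and therefore sits inside a strictly larger arc. The greedy procedure thus reaches size $2r$ as long as $\binom{2r-1}{2}\le q$, i.e., $(2r-1)(r-1)\le q$. The hypothesis $r<\sqrt{q/2}$ gives $2r^2<q$, which implies $(2r-1)(r-1)=2r^2-3r+1<2r^2<q$, so the arithmetic is comfortable.

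For part (ii), I use that $PG(2,q)$ is self-dual and contains an oval of size $q+1$ when $q$ is odd and a hyperoval of size $q+2$ when $q$ is even. The hypothesis $r\le\lfloor(q+1)/2\rfloor$ translates to $2r\le q+1$ in the odd case and $2r\le q$ in the even case; in either situation a $2r$-subset of such an oval or hyperoval is itself a $2r$-arc, and self-duality turns it into $2r$ lines in general position in $PG(2,q)$. Proposition \ref{dual} then finishes the argument.

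I do not expect any substantive obstacle: the whole proof consists of correctly routing the problem through duality to Proposition \ref{dual} and then checking the two elementary inequalities that arise.
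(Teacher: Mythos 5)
Your proof is correct and follows essentially the same route as the paper: both parts are reduced to exhibiting $2r$ lines in general position so that Proposition \ref{dual} applies, using Proposition \ref{minarc} in the dual plane for (i) and self-duality together with ovals for (ii). Your greedy extension argument in (i) is just a slightly more explicit version of the paper's observation that a maximal arc in $\overline{\Pi_q}$ must already have more than $2r$ points.
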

\begin{proof}
	In both cases we want to apply Proposition \ref{dual}.
	
	Proposition \ref{minarc} implies that for a maximal $k$-arc in $\overline{\Pi_q}$ we have $q < \binom{k}{2}$. This implies that $\overline{\Pi_q}$ contains a $2r$-arc if $r< \sqrt{\frac{q}{2}}$, hence $\Pi_q$ has $2r$ lines in general position. For the second statement recall that the standard projective plane is self-dual and contains ovals, thus it contains $q+1$ lines in general position.
\end{proof}

We slightly improve Corollary \ref{m_r} i) in the next proposition:

\begin{proposition}\label{minPercConst}
	If $r< \sqrt{2q}$, then $m_r(\Pi_q) = \binom{r+1}{2}$.
\end{proposition}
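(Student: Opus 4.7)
The plan is to establish the matching upper bound $m_r(\Pi_q)\le\binom{r+1}{2}$ by constructing an explicit percolating set of that size; the lower bound is already Proposition~\ref{maxPercConst}. The key new idea over Proposition~\ref{dual} (which needed $2r$ lines in general position) is to use only $r+1$ lines in general position, placing all points of $A$ at their mutual intersections so that every one of those $r+1$ lines becomes fully infected in a single round.

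First I would produce the requisite $r+1$ lines in general position. The hypothesis $r<\sqrt{2q}$ gives $r(r-1)<r^2<2q$, while Proposition~\ref{minarc}, translated to lines via duality, says that any maximal set of lines in general position has size $k$ with $k(k-1)>2q$. Comparing these two inequalities forces $k\ge r+1$, so let $L=\{l_1,\ldots,l_{r+1}\}$ be any such set.

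Next I would let $A$ be the $\binom{r+1}{2}$ pairwise intersection points $l_i\cap l_j$ with $1\le i<j\le r+1$. These points are distinct because $L$ is in general position, and each $l_i$ contains exactly $r$ of them (one for each of its $r$ partners in $L$). Hence $|l_i\cap A|=r$ for every $i$, so all $r+1$ lines of $L$ become fully infected in round~$1$, i.e.\ $A^1\supseteq l_1\cup\cdots\cup l_{r+1}$.

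To finish I would invoke Proposition~\ref{r-lines}. Its hypothesis $\binom{r}{2}\le q$ holds because $r(r-1)/2<r^2/2<q$, and $A^1$ already contains $r+1\ge r$ fully infected lines, so Proposition~\ref{r-lines} yields $A^2=\mathcal{P}$ and $A$ percolates. I do not foresee any substantial obstacle here; the only real novelty is the structural observation that taking $A$ to be the intersection points of $r+1$ general-position lines infects all of them at once, after which Proposition~\ref{r-lines} takes over and finishes the job, sparing us the factor-$2$ loss inherent in the line-by-line infection strategy of Proposition~\ref{dual}.
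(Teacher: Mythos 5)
Your proof is correct, but it takes a genuinely different route from the paper. The paper also extracts lines in general position from Proposition~\ref{minarc} via duality, but it uses only $r$ of them and runs the one-by-one model: it loads $l_i$ with $r-i+1$ fresh points so that $l_1,\dots,l_r$ ignite sequentially, and then completes the percolating sequence by hand, choosing a point $P\in l_1\setminus\bigcup_{j\ge 2}l_j$ and $r-1$ further lines through $P$ avoiding the pairwise intersections of $l_2,\dots,l_r$, so that $A^{\ast}$ eventually contains an $r$-broom and Proposition~\ref{r-broom} applies. You instead observe that the hypothesis $r<\sqrt{2q}$ already forces a maximal dual arc of size $k$ with $k(k-1)>2q>r(r-1)$, hence $r+1$ lines in general position, and you place $A$ at their $\binom{r+1}{2}$ pairwise intersections so that all $r+1$ lines become infected simultaneously in round one; Proposition~\ref{r-lines} (whose hypothesis $\binom{r}{2}\le q$ follows from $r^2<2q$) then finishes in one more round. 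All the steps check out: the intersection points are distinct by general position, each line of $L$ carries exactly $r$ of them, and $r+1\ge r$ infected lines suffice for Proposition~\ref{r-lines}. Your argument is shorter, avoids the delicate choice of broom lines, and as a bonus exhibits an extremal percolating set with percolation time $2$ (and in fact only needs $\binom{r}{2}\le q$ rather than $r<\sqrt{2q}$); the paper's version is the natural continuation of the sequential strategy of Proposition~\ref{dual} and of the lower-bound analysis in Proposition~\ref{maxPercConst}.
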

\begin{proof}
We use the one-by-one model and construct simultaneously a percolating sequence of lines and a percolating set $A$ by defining its needed parts $A_i$. Proposition \ref{minarc} and the assumption $r< \sqrt{2q}$ imply that there exists a set $L=\{l_1,l_2,\dots, l_{r}\}$ of $r$ lines in general position. As in the proof of Proposition \ref{dual}, we start our percolating sequence with $l_1,l_2,\dots,l_{r}$ and let $A_i$ be an $(r-i+1)$-subset of $l_i\setminus \cup_{j<i}l_j$ and we let $A_i$ be empty for $i> r$. We have to finish our percolating sequence.

Let us choose $P\in l_1\setminus \cup_{j=2}^{r}l_j$ and let $l_{r+1},l_{r+2},\dots, l_{2r-1}$ be lines containing $P$
but not containing any intersection $l_i\cap l_j$ ($2\le i<j\le r$). Observe that $\binom{r-1}{2}+1+(r-2) < q+1$ enables us to choose such lines. As for every $i=r+1,r+2,\dots,2r-1$ the line $l_i$ intersects $\cup_{j=0}^{r-1}l_j$ in $r$ different points, we can continue our percolating sequence by $l_{r+1},l_{r+2},\dots,l_{2r-1}$. As $l_1,l_{r+1},l_{r+2},\dots,l_{2r-1}$ form an $r$-broom, we can first add all lines not containing $P$ to our percolating sequence and then all remaining lines.
\end{proof}

By a more elaborate use of the one-by-one model we strengthen further the lower bound on $m_r(\Pi_q)$. Let $N$ and $j$ ($j\leq q$) be positive integers to be set later.
Let $A$ be a smallest percolating subset of $\Pi_q$ and let $l_1,\dots, l_{q^2+q+1}$ be a percolating sequence of lines. We will consider the initial segment of size $N$ of the percolating sequence and study the intersection of the $i$\textsuperscript{th} line with the union of the previous lines as follows: let $f^k(i)$ denote the number of points of $l_{i+1}$ that are adjacent to exactly $k$ lines out of $\{l_1,\dots, l_i\}$ for $k=1,2,\dots,j$   and let $g(i)$ denote the number of points of $l_{i+1}$ with more than $j$ adjacencies. Obviously, $f^1(1)=\dots=f^j(1)=g(1)=0$.  In addition, let $s^k(i)$ denote the number of all points adjacent to exactly $k$ lines  and let $\hat{s}(i)$ denote the number of points with more than $j$ adjacent lines out of $\{l_1,\dots, l_i\}$ for $k=1,2,\dots,j$. We define $f^k(0)=0$, $s^k(0)=0$ for  $k=1,2,\dots,j$ and $g(0)=0$ and $\hat{s}(0)=0$. Observe that the following identities hold for $0\le i\le N-1$:

%\begin{align}
$$	s^1(i+1) - s^1(i)= q+1 - \sum\limits_{k=1}^{j}f^k(i) - g(i) - f^1(i), $$ 
$$
	s^k(i+1)-s^k(i)  = f^{k-1}(i) - f^k(i)\text{ } \ \ (2\leq k\leq j), %\\
%	\hat{s}(i+1) - \hat{s}(i)    &= f^{j}(i).
$$%\end{align}\textit{}

Introducing the notation $f_k=\sum\limits_{i=1}^{N-1} f^k(i)$ and $g=\sum\limits_{i=1}^{N-1}g(i)$ and summing up our above equations for $i=0,1,\dots,N-1$ we obtain:
%\begin{align}
$$
	s^1(N) - s^1(0)  = N(q+1) - \sum\limits_{k=1}^{j}f_k - f_1-g, $$ 
	$$	s^k(N) - s^k(0)  = f_{k-1} - f_k\text{ } \ \ (2\leq k\leq j), %\\
%	\hat{s}(N) - \hat{s}(0) &= f_{j}.
%\end{align}
$$
Observe that the left hand sides of the above equations are non-negative, thus we have obtained
\begin{align}
	0&\leq N(q+1) - \sum\limits_{k=1}^{j}f_k - f_1-g, \label{eq1}\\
	0&\leq f_{k-1} - f_k\text{ } \ \ (2\leq k\leq j), \label{eq2}%\\
%	0&\leq f_{j}.\label{eq5}
\end{align}

By definition $f_j$ and $g$ are the sums of non-negative numbers, thus:
\begin{align}
0&\leq f_{j}.\label{eq5}\\
0&\le g.\label{eq6}
\end{align}

Counting the adjacencies of $l_{i+1}$ with the previous $i$ lines, we obtain the following inequality:
$$%\begin{equation}
	\sum\limits_{k=1}^j k f^k(i)+(j+1) g(i)\leq i.
$$%\end{equation}
By summing again for $i=0,1,\dots,N-1$ we get
\begin{equation}
	\sum\limits_{k=1}^j k f_k+(j+1) g\leq \binom{N}{2}.\label{eq4}
\end{equation}

Finally, observe that for $A_i$, the needed part of $A$ in step $i$, we have \[|A_i|\ge \max\{0, r-\sum\limits_{k=1}^j f^k(i)-g(i)\}\geq r-\sum\limits_{k=1}^j f^k(i)-g(i). \] This yields:
\begin{equation}\label{opt}
	m_r(\Pi_q)=|A|\ge\sum_{i=1}^N|A_i|\geq Nr-\sum\limits_{k=1}^j f_k-g.
\end{equation}
Therefore we want to minimize the $(j+1)$-variable function $h:=Nr-\sum_{k=1}^j f_k-g$ on the solutions $(f_1,f_2,\dots,f_j,g)\in     \mathbf{R}^{d+1}$ of the  inequality system $(\ref{eq1},\ref{eq2},\ref{eq5},\ref{eq6},\ref{eq4})$ of $j+3$ inequalities, which is a convex polytope $P$ in $\mathbf{R}^{d+1}$. 

The reason is that we know by (\ref{opt}) that $m_r(\Pi_q)\ge h$ for some values of the variables in $P$, and so a global minimum of $h$ on $P$ is a lower bound on $m_r(\Pi_q)$.

Now any linear function is minimized on some vertex of the polytope. All vertices are such that $j+1$ of the inequalities hold with  equality (and the additional $2$ of them hold with equality or inequality).

We claim that the optimal vertex is the point $v$ for which the  $j+1$ inequalities (\ref{eq1}),(\ref{eq2}),(\ref{eq4}) hold with equality, assuming that this is indeed a vertex of $P$.

Take the hyperplane $H$ going through $v$ and on which $h$ is constant. 
Observe that $\underline{0}\in P$ and $h(\underline{0})=Nr$ while for $v$ (\ref{eq1}) holds with equality and so $h(v)=Nr-N(q+1)+f_1<Nr$ (as $f_1$ is the sum of $N$ numbers, all at most $q+1$ and the first summand is $f^1(0)=0<q+1$). Thus, $v$ cannot maximize $h$ on $P$. So to show that $v$ minimizes $h$ on $P$, it is enough to prove that $v$ is the only point in $H\cap P$.

Take any vector parallel to $H$, that is $(d_1,d_2,\dots, d_j,d_{j+1})\ne \underline{0}$ such that $\sum d_i=0$. We claim that $v+d$ is outside $P$. Indeed, recall that for $v$ inequalities (\ref{eq1}),(\ref{eq2}),(\ref{eq4}) hold with equality, and $\sum d_i=0$. Thus, assuming $v+d$ is inside $P$, by (\ref{eq1}) for $v+d$ we must have $d_1\le 0$, while from inequality system (\ref{eq2}) we get that $$d_j\le d_{j-1}\le \dots \le d_1\le 0.$$
By (\ref{eq4}) we get $$d_1+2d_2+\dots jd_j+(j+1)d_{j+1}\le 0$$ and using that $d_{j+1}=-d_1-d_2-\dots -d_j$ this gives 
$$-jd_1-(j-1)d_2-\dots -d_j\le 0.$$ These imply that $d_i=0$ for all $i=1,2,\dots,j+1$, a contradiction. We can conclude that $v$ minimizes $h$ on $P$.

For the vertex $v$ we have that $f_1=f_2=f_j=f$ for some number $f$ as inequality system (\ref{eq2}) holds with equality and then as inequalities (\ref{eq1}),(\ref{eq4}) hold with equality too, also $$(j+1)f+g=N(q+1)$$  $$N(N-1)/2=(j+1)(jf/2+g). $$ Solving these two equalities we get that for $v$ we have 

$$	f=\frac{2}{j+2}N(q+1)-\frac{N(N-1)}{(j+2)(j+1)},$$
$$	g=\frac{N(N-1)}{j+2}-\frac{j}{j+2}N(q+1),$$
	\begin{align}
	h_{min}=Nr-jf-g=Nr-N(q+1)\frac{j}{j+2}-\frac{N(N-1)}{(j+1)(j+2)}.\label{hbound}
\end{align}

Earlier we assumed that $v$ is not outside $P$, that is, it is a vertex of $P$. For that we need that (\ref{eq5}) and (\ref{eq6}) hold, that is, $f\ge 0$ and $g\ge 0$. This gives that we need:
\begin{align}
	j(q+1) \le N-1\le 2(j+1)(q+1).\label{inside}
\end{align}

Now we can use (\ref{hbound}) to get explicit lower bounds on $m_r(\Pi_q)$ as by (\ref{opt}) we have $m_r(\Pi_q)\ge h\ge h_{min}$.

\smallskip

We first consider the case $r=(1-o(1))q$. For fixed $j$ let $N=(j+1)q$ which clearly satisfies (\ref{inside}). Then (\ref{hbound}) gives the following %asymptotic 
lower bound on $m_r(\Pi_q)$:
\[
m_r(\Pi_q)\ge q^2(1-o(1))(j+1)-q^2\frac{j(j+1)}{j+2}-q\frac{j(j+1)}{j+2}-q^2\frac{j+1}{j+2}+q\frac{1}{j+2}= \frac{j+1}{j+2}q^2-o(q^2).
\]

%\[
%(j+1)q^2-q^2\frac{(j+1)j}{j+2}-q^2\frac{(j+1)^2}{(j+1)(j+2)}=q^2(j+1)(1-\frac{j+1}{j+2})=\frac{j+1}{j+2}q^2.
%\]
As $j$ can be chosen arbitrarily (only $j<q$ was assumed), we obtain the following result if $q\rightarrow \infty$ (as trivially $q^2+q+1$ is an upper bound). 
\begin{thm}\label{minpercoq}
	$m_{(1-o(1))q}(\Pi_q)= (1-o(1))q^2.$
	\end{thm}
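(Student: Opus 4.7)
The plan is to combine a trivial upper bound with the LP-style lower bound derived in equation (\ref{hbound}) just above the statement. For the upper bound, the entire point set $\mathcal{P}$ of $\Pi_q$ obviously percolates and has size $q^2+q+1 = (1+o(1))q^2$; this already yields $m_r(\Pi_q) \le (1+o(1))q^2$ regardless of $r$.

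For the lower bound, I would use the family of inequalities $m_r(\Pi_q)\ge h_{\min}$ furnished by (\ref{hbound}), where we are free to pick any integer $j\ge 1$ and any $N$ subject to the admissibility condition (\ref{inside}). The natural choice, already flagged by the calculation right before the statement, is $N=(j+1)q$ for a fixed integer $j$. First I would verify that (\ref{inside}) holds for this choice: $j(q+1) \le (j+1)q - 1$ reduces to $j\le q-1$, and $(j+1)q - 1 \le 2(j+1)(q+1)$ is obvious, so the choice is admissible as soon as $q > j$.

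Next I would substitute $N=(j+1)q$ and $r=(1-o(1))q$ into the formula $h_{\min}=Nr-N(q+1)\tfrac{j}{j+2}-\tfrac{N(N-1)}{(j+1)(j+2)}$ and keep track of the dominant $q^2$ terms. Routine algebra (using $(j+1)-\tfrac{(j+1)^2}{j+2}=\tfrac{j+1}{j+2}$) collapses the $q^2$ coefficients to $\tfrac{j+1}{j+2}$, with the $o(1)$ factor from $r$ absorbed into an $o(q^2)$ error and the lower-order linear-in-$q$ terms likewise absorbed, yielding
\[
m_r(\Pi_q)\ \ge\ h_{\min}\ \ge\ \frac{j+1}{j+2}\,q^2 - o(q^2).
\]

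Finally, since $j$ is a free parameter (the only constraint being $j<q$), for any $\varepsilon>0$ I would first pick $j$ large enough to force $\tfrac{j+1}{j+2}>1-\tfrac{\varepsilon}{2}$, and then let $q\to\infty$ so that the $o(q^2)$ error drops below $\tfrac{\varepsilon}{2}q^2$. This yields $m_r(\Pi_q)\ge (1-\varepsilon)q^2$ for all sufficiently large $q$, i.e.\ $m_r(\Pi_q)\ge(1-o(1))q^2$, which with the trivial upper bound gives the theorem. The only genuinely delicate point is the interplay between the two vanishing quantities---the $o(1)$ hidden in the hypothesis $r=(1-o(1))q$ and the $\tfrac{1}{j+2}$ loss---but this is cleanly handled by the standard ``fix $j$ large, then let $q\to\infty$'' diagonalization rather than letting $j$ grow with $q$ inside the same limit.
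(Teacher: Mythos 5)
Your proposal is correct and follows essentially the same route as the paper: the trivial upper bound $q^2+q+1$, the lower bound obtained by substituting $N=(j+1)q$ and $r=(1-o(1))q$ into (\ref{hbound}) to get $\frac{j+1}{j+2}q^2-o(q^2)$ after checking (\ref{inside}), and then letting $j$ be arbitrarily large. The ``fix $j$, then let $q\to\infty$'' handling of the two limits is exactly the intended reading of the paper's argument.
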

\smallskip

In particular, $m_q(\Pi_q)= q^2+O(q)$.

Consider now the general case when $r=cq$ for some constant $0<c<1$.  For fixed $j$ let again $N=(j+1)q$ in order to satisfy (\ref{inside}). Then (\ref{hbound}) gives the following %asymptotic 
lower bound on $m_r(\Pi_q)$:

\[
m_r(\Pi_q)\ge \left(c(j+1)-\frac{(j+1)^2}{j+2}\right)q^2-O(q).
\]

For a given $c$, let $$t(c,j)=c(j+1)-\frac{(j+1)^2}{j+2}.$$  If $c\ge c_j=1-\frac{1}{(j+1)(j+2)}$ then as $t(c,j)$ is monotone in $c$, we get that $$t(c,j)\ge  t({c_j},j)= \frac{j}{j+2}.$$ 
Thus for general $c=1-\alpha$ this implies the following lower bound:

\begin{thm}\label{thm:cq}
	Given a constant $\alpha$ with $\frac{1}{ (j+2)(j+3)}< \alpha\le \frac{1}{(j+1)(j+2)}$ for some $j$ positive integer, we have 
	$$m_{(1-\alpha)q}(\Pi_q)\ge \left(\frac{j}{j+2}-o(1)\right)q^2\ge (1-2\sqrt{\alpha}-2\alpha-o(1))q^2.$$
\end{thm}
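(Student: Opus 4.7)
The plan is to feed the general lower bound derived in~(\ref{hbound}) with $r=cq$ and $N=(j+1)q$ back with the specific value $c=1-\alpha$ and the value of $j$ prescribed by the hypothesis. Since $\alpha\le \frac{1}{(j+1)(j+2)}$ we have $c=1-\alpha\ge 1-\frac{1}{(j+1)(j+2)}=c_j$, so the monotonicity of $t(c,j)=c(j+1)-\frac{(j+1)^2}{j+2}$ in $c$ (already observed in the excerpt) yields $t(c,j)\ge t(c_j,j)=\frac{j}{j+2}$. Plugging this into the bound $m_r(\Pi_q)\ge (t(c,j)-o(1))q^2$ established above gives the first asserted inequality of the theorem.

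For the second inequality I would argue by pure arithmetic. It suffices to show $\frac{j}{j+2}\ge 1-2\sqrt{\alpha}-2\alpha$, which rearranges to $\frac{1}{j+2}\le \sqrt{\alpha}+\alpha=\sqrt{\alpha}(1+\sqrt{\alpha})$. Here the lower hypothesis $\alpha>\frac{1}{(j+2)(j+3)}$ enters: it gives $\sqrt{\alpha}>\frac{1}{\sqrt{(j+2)(j+3)}}$, and multiplying this (both factors positive) by $1+\sqrt{\alpha}>1+\frac{1}{\sqrt{(j+2)(j+3)}}$ reduces the task to the concrete inequality
\[
\sqrt{\tfrac{j+2}{j+3}}+\tfrac{1}{j+3}\ge 1,
\]
which, after using $1-\tfrac{1}{j+3}=\tfrac{j+2}{j+3}$, is equivalent to $\sqrt{\tfrac{j+2}{j+3}}\ge \tfrac{j+2}{j+3}$, a standard instance of $\sqrt{x}\ge x$ for $x\in[0,1]$.

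I do not anticipate any real obstacle: the heavy lifting has already been done in the derivation of~(\ref{hbound}) and this statement is essentially a cleanly packaged corollary. The one point worth watching is that the intervals $(\tfrac{1}{(j+2)(j+3)},\tfrac{1}{(j+1)(j+2)}]$ select, for each small $\alpha$, a unique admissible $j$, and that $j$ is precisely the one making the optimum of $t(c,j)$ at $c=c_j$ realize the asymptotic $1-2\sqrt{\alpha}-2\alpha$ up to lower-order terms in $\alpha$. Everything else is elementary manipulation together with the bookkeeping that the $O(q)/q^2$ error from~(\ref{hbound}) is absorbed into the stated $o(1)$.
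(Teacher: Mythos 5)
Your proposal is correct and follows essentially the same route as the paper: take the bound from (\ref{hbound}) with $N=(j+1)q$, use the monotonicity of $t(c,j)$ in $c$ together with $1-\alpha\ge c_j$ to get $t(1-\alpha,j)\ge t(c_j,j)=\frac{j}{j+2}$, and absorb the $O(q)$ error into $o(q^2)$. Your explicit arithmetic verification of $\frac{j}{j+2}\ge 1-2\sqrt{\alpha}-2\alpha$ via $\sqrt{\tfrac{j+2}{j+3}}+\tfrac{1}{j+3}\ge 1$ is a correct filling-in of a step the paper merely asserts.
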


Apart from the $o(q^2)$ error term, Theorem \ref{thm:cq} gives the lower bound $(1-2\sqrt{\alpha}-2\alpha-o(1))q^2$. %which is bigger than $0$ whenever $\sqrt{\alpha}+\alpha<1/2$, that is, $\alpha<1-\sqrt3/2\approx 0.134$. 
Recall that Proposition \ref{maxPercConst} gives the lower bound $(1/2-\alpha+\alpha^2/2)q^2$. The bound of Theorem \ref{thm:cq} is better than this when $\alpha<0.05$. More importantly, the bound of Theorem \ref{thm:cq} tends to $q^2$ as $\alpha\rightarrow 0$.
Finally, in this context Proposition \ref{minpercupper} gives the upper bound $(1-2\alpha+\alpha^2)q^2$.

%%%%%%%%%%%%%%%%%%%%%%%%%%%%%%%%%%%%%%%%%%%%%%%%%%%%%%%%%%%%%%%%%%%%%%%%%%%%%
%						Maximal non-percolating sets
%%%%%%%%%%%%%%%%%%%%%%%%%%%%%%%%%%%%%%%%%%%%%%%%%%%%%%%%%%%%%%%%%%%%%%%%%%%%%%
\section{Maximal non-percolating sets}
Let $A\subset\mathcal{P}$ be a maximal non-percolating set; observe that $A^1=A$ necessarily holds, in other words, every non-infected line $l$ of the plane intersects $A$ in at most $r-1$ points. The focus of study in this section is the maximum number of points in $A$.

Note that similar structures of projective planes have been studied for a while; a set of point $A$ of $\Pi_q$ is called a $(k,n)$-\textit{arc} if $|A|=k$ and $A$ intersects every line of the plane in at most $n$ points (and it intersects at least one line in exactly $n$ points). It was proved by Barlotti \cite{Barlotti} that a $(k,n)$-arc of $\Pi_q$ contains at most $nq-q+n$ points. Obviously, a $(k,r-1)$-arc of the plane is a non-percolating set; applying substitution the above theorem yields that a non-percolating set obtained by a $(k,n)$-arc contains at most $(r-1)q-q+r-1$ points. On the other hand, non-percolating sets, unlike $(k,n)$-arcs, might contain fully infected lines as well, thus the above upper bounds on the $(k,n)$-arc are unlikely to provide sharp results for non-percolating sets; in fact we provide examples of non-percolating sets with $(r-1)q + 1 > (r-1)q-q+r-1$ points in the upcoming Proposition \ref{gap}. We also establish a general upper bound on the maximum size of a non-percolating set then investigate the sharpness of the bounds.
\begin{proposition}\label{gap}
$q(r-1)+1\leq  M_r(\Pi_q)\leq (q+1)(r-1)$.
\end{proposition}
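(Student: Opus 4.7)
For the lower bound, I would exhibit a concrete non-percolating set of size $q(r-1)+1$. Fix any point $P\in\Pi_q$ and let $l_1,l_2,\dots,l_{r-1}$ be any $r-1$ distinct lines through $P$. Set $A=\bigcup_{i=1}^{r-1}l_i$. Since the $l_i$ pairwise meet only at $P$, we have
\[
|A|=(r-1)(q+1)-(r-2)=q(r-1)+1.
\]
Because $A$ is the union of $r-1$ lines, Proposition \ref{notpercolate} immediately gives that $A$ does not percolate. This shows $M_r(\Pi_q)\ge q(r-1)+1$.

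For the upper bound, I would pass to the closure and count through a single point. Let $A$ be any non-percolating set; then $B:=\cl(A)\ne\mathcal{P}$, so we may pick $P\in\mathcal{P}\setminus B$. The key observation is that no line through $P$ is contained in $B$. Indeed, if some line $l\ni P$ satisfied $|l\cap B|\ge r$, then in the next round of the closure process $l$ would become fully infected and hence $l\subseteq B$, contradicting $P\in l\setminus B$. Since $B=\cl(A)$ is already closed, this forces
\[
|l\cap B|\le r-1\qquad\text{for every line }l\text{ through }P.
\]
The $q+1$ lines through $P$ partition $\mathcal{P}\setminus\{P\}$, so summing these contributions gives
\[
|A|\le|B|=\sum_{l\ni P}|l\cap B|\le(q+1)(r-1),
\]
completing the proof.

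Neither direction presents a serious obstacle; the only point requiring care is the brief argument that a line through a point outside the closure cannot meet the closure in $r$ or more points, which uses the maximality/closedness of $\cl(A)$ rather than just the non-percolation of $A$ itself.
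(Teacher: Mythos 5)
Your proof is correct and follows essentially the same route as the paper: the lower bound via an $(r-1)$-broom (whose non-percolation follows from Proposition \ref{notpercolate}), and the upper bound by counting along the $q+1$ lines through a point outside the closure, each of which meets the infected set in at most $r-1$ points. Your version is merely more explicit about passing to $\cl(A)$ and about why a line through an uninfected point cannot carry $r$ infected points.
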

\begin{proof}
For the first inequality, note that an $(r-1)$-broom does not percolate and has exactly $q(r-1)+1$ points. For the second inequality, take an uninfected point $P$ and observe that any line through $P$ contains at most $r-1$ infected points.
\end{proof}

\begin{proposition}\label{Mexact}
If $r< \frac{q}{2}+2$, then $M_r(\Pi_q) = q(r-1)+1$.
\end{proposition}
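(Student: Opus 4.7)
The plan is a proof by contradiction: assume $A$ is non-percolating with $|A|\geq q(r-1)+2$, and derive a contradiction from $r<q/2+2$. Let $I$ denote the set of lines fully contained in $A$, set $m=|I|$, and for $P\in A$ let $k_P$ count the lines of $I$ through $P$. Since $A$ is closed under the infection rule, every line outside $I$ meets $A$ in at most $r-1$ points.

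The first block extracts structure on $I$. Fixing $P\in A$ and summing $|l\cap A|-1$ over the $q+1$ lines through $P$ yields
\[|A|-1\leq k_P q+(q+1-k_P)(r-2),\]
which combined with $|A|\geq q(r-1)+2$ forces $k_P\geq 2$; hence every $P\in A$ lies on a line of $I$, so in fact $A=\bigcup_{l\in I}l$. The matching bound $k_P\leq r-1$ holds by Proposition \ref{r-broom} (otherwise an $r$-broom through $P$ would percolate). Fixing then any $l_1\in I$ and using $k_P\geq 2$ at each of its $q+1$ points to produce a distinct second line of $I$ yields $m\geq q+2$.

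The second block is a Cauchy--Schwarz estimate on the union of the lines of $I$. The incidence identities $\sum_{P\in A}k_P=m(q+1)$ and $\sum_{P\in A}\binom{k_P}{2}=\binom{m}{2}$ (each pair of lines of $I$ meets at a single point of $A$) combine into $\sum_{P\in A}k_P^2=m(m+q)$, so that $(\sum k_P)^2\leq|A|\sum k_P^2$ gives the lower bound $|A|\geq m(q+1)^2/(m+q)$. Since $A\neq\mathcal{P}$ (as $\mathcal{P}$ trivially percolates), fixing any point $P^*\notin A$ and using that the $q+1$ lines through $P^*$ all lie outside $I$ furnishes the complementary upper bound $|A|\leq(q+1)(r-1)$. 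Matching these two bounds reduces algebraically to $m\leq (r-1)q/(q-r+2)$, and a routine check at the extremal integer values of $r$ (namely $r=q/2+1$ for $q$ even, where the bound becomes $m\leq q^2/(q+2)<q$, and $r=(q+3)/2$ for $q$ odd, where it becomes $m\leq q$) shows that for $r<q/2+2$ we always get $m\leq q$, contradicting $m\geq q+2$.

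The main obstacle I expect is identifying the sharp incidence inequality. The easier estimate $|A|\leq m(q+1)/2$ coming directly from $k_P\geq 2$ only forces a contradiction in the much narrower regime $r<1+\sqrt{q+2}$; it is the Cauchy--Schwarz bound $|A|\geq m(q+1)^2/(m+q)$ that does the job, and this bound is tight on the dual-hyperoval configuration (with $m=q+2$ and $|A|=\binom{q+2}{2}$), which is precisely the construction that starts percolating exactly when the hypothesis $r<q/2+2$ fails -- explaining why this particular estimate captures the sharp threshold.
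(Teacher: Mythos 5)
Your proof is correct and follows the same skeleton as the paper's: both arguments first show, by summing the contributions of the $q+1$ lines through a point $P\in A$ (each fully infected line giving $q$ further points of $A$, each other line at most $r-2$), that a closed non-percolating set with $|A|>q(r-1)+1$ must have every point on at least two fully infected lines; and both then derive a contradiction with the upper bound $|A|\le (q+1)(r-1)$ of Proposition \ref{gap}. Your unified inequality $|A|-1\le k_Pq+(q+1-k_P)(r-2)$ is exactly the paper's Cases 1 and 2. The difference lies in how the final contradiction is extracted. The paper fixes one infected line $\ell$ and a second infected line through each of its $q+1$ points, and bounds the union of these $q+2$ lines from below by $(q+1)+q+\dots+1=\binom{q+2}{2}>(q+1)(r-1)$ directly. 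You instead prove $m\ge q+2$ for the total number $m$ of infected lines and run a Cauchy--Schwarz/incidence computation (from $\sum_P k_P=m(q+1)$ and $\sum_P\binom{k_P}{2}=\binom{m}{2}$) to get $|A|\ge m(q+1)^2/(m+q)$, whence $m\le (r-1)q/(q-r+2)\le q$, a contradiction. Both estimates are tight on the dual hyperoval and produce the identical threshold $r<\frac{q}{2}+2$; the paper's version is shorter and more elementary, while yours yields the extra quantitative information that a closed non-percolating set exceeding $q(r-1)+1$ points would need at least $q+2$ but at most $(r-1)q/(q-r+2)$ fully infected lines.
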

\begin{proof}
Let $A$ be a maximum size non-percolating set and let $P\in A$.
\begin{itemize}
\item[Case 1.] If there exists no infected line containing $P$, then every line through $P$ contains at most $r-2$ infected points, thus $|A|\leq (q+1)(r-2)+1\leq q(r-1)+1$.
\item[Case 2.] If there exists exactly one infected line containing $P$, then by similar counting we have
$|A|\leq q(r-2)+q+1 = q(r-1)+1$.
\item[Case 3.] In the remaining case every $P\in A$ is contained in at least 2 infected lines. Fix an infected line $l$ in $A$ and one additional infected line for every point in $l$. The union of the selected $q+2$ lines contains at least $(q+1)+q+(q-1)+\dots+1=\binom{q+2}{2}$ infected points which exceeds the upper bound of Proposition \ref{gap} if $r< \frac{q}{2}+2$, a contradiction.
\end{itemize}
\end{proof}

We have not been able to improve the upper bound of Proposition \ref{gap} for $r\geq \frac{q}{2}+2$. On the other hand, we show that the upper bound of $(q+1)(r-1)$ is sharp in certain cases. We believe that the results are particular cases of a more general pattern that holds for $r\geq \frac{q}{2}+2$ in general. We have been able to neither verify nor disprove the statement, thus we close up this section by stating it as a conjecture.

\begin{proposition}
Let $q$ be even and assume that the dual plane of $\Pi_q$ has a hyperoval. If $r=\frac{q}{2}+2$, then $M_{r}(\Pi_q) = (q+1)(r-1)$.
\end{proposition}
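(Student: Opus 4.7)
Since Proposition \ref{gap} already supplies the matching upper bound $M_r(\Pi_q) \le (q+1)(r-1)$, my plan is to exhibit a non-percolating set $A$ of size exactly $(q+1)(r-1) = \binom{q+2}{2}$. The construction I would use is very natural given the hypothesis: let $H$ be the collection of $q+2$ lines of $\Pi_q$ corresponding to a hyperoval of $\overline{\Pi_q}$, and let $A = \bigcup_{l \in H} l$. The crucial dual translation is that, since every line of $\overline{\Pi_q}$ meets a hyperoval in either $0$ or $2$ points, every point of $\Pi_q$ lies on either $0$ or exactly $2$ members of $H$. In particular, no three lines in $H$ are concurrent.

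With this in hand, I would compute $|A|$ by inclusion-exclusion: the $q+2$ lines contribute $(q+2)(q+1)$ incidences, each of the $\binom{q+2}{2}$ pairwise intersection points is then subtracted once, and no further correction is needed because no three lines meet. This gives
\[
|A| = (q+2)(q+1) - \binom{q+2}{2} = \binom{q+2}{2} = (q+1)(r-1).
\]

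Next I would verify that $A$ is closed under the percolation rule. The $q+2$ lines in $H$ lie entirely in $A$, so the only concern is a line $l' \notin H$. By the $0$-or-$2$ property, every point of $l' \cap A$ lies on exactly two members of $H$. As $l$ ranges over $H$, each intersection $l \cap l'$ is a single point of $\Pi_q$, and each point of $l' \cap A$ is obtained this way exactly twice. Hence $|l' \cap A| = (q+2)/2 = r-1$, so $l'$ does not become infected; thus $A^1 = A$, and because $|A| = \binom{q+2}{2} < q^2+q+1$ for $q \ge 2$, the set $A$ fails to percolate.

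The only real insight needed is the passage from a dual hyperoval to a configuration of $q+2$ primal lines whose ``external'' lines are each met in precisely $(q+2)/2$ points; the fact that this number equals both the counting contribution and the infection threshold $r-1$ is what forces the equality $M_r(\Pi_q) = (q+1)(r-1)$. The evenness of $q$ together with the specific value $r = q/2+2$ are both essential for this alignment, which is presumably why the authors state the result only in this range.
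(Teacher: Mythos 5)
Your proposal is correct and coincides with the paper's proof: both take the union $A$ of the $q+2$ lines dual to a hyperoval, use the fact that every point lies on $0$ or $2$ of them to conclude that every other line meets $A$ in exactly $\frac{q+2}{2}=r-1$ points, so $A$ is non-percolating of size $\binom{q+2}{2}=(q+1)(r-1)$. The only difference is that you spell out the inclusion--exclusion count of $|A|$, which the paper leaves implicit.
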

\begin{proof}
The hyperoval of the dual plane translates to $q+2$ lines in $\Pi_q$ such that no three intersect at the same point. Let $A$ be the union of those lines. Observe that every point is contained in exactly zero or two of those lines, thus any additional line intersects $A$ in exactly $\frac{q+2}{2}=\frac{q}{2}+1$ points, hence $A$ does not percolate.
\end{proof}
We mention that the above construction works for higher values of $r$ as well. Nevertheless, if $r> \frac{q}{2}+\frac{5}{2}$, the dual hyperoval  does not yield a bigger construction than the already discussed $(r-1)$-broom.
\begin{proposition}
Let $q$ be even and assume that $\Pi_q$ has a hyperoval.
 If $r=q$, then $M_r(\Pi_q) = (q+1)(r-1)$.
\end{proposition}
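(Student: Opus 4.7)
The upper bound $M_q(\Pi_q)\le (q+1)(r-1)=q^2-1$ is already supplied by Proposition \ref{gap}, so the entire task is to exhibit a non-percolating set of size exactly $q^2-1$. The natural candidate, mirroring the previous proposition (which used a dual hyperoval), is the complement of a hyperoval: let $H\subset\mathcal{P}$ be a hyperoval and define $A=\mathcal{P}\setminus H$. Then $|A|=(q^2+q+1)-(q+2)=q^2-1$, matching the target.

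To check that $A$ does not percolate, I would use the standard fact that since $q$ is even, every line meets a hyperoval in either $0$ or $2$ points. A short argument: through any $P\in H$ the $q+1$ lines must distribute the remaining $q+1$ hyperoval points, and since no three points of $H$ are collinear each of these lines carries exactly one other point of $H$; so no tangent exists, and every line is a $0$-secant or a $2$-secant of $H$. Consequently for each line $\ell$, either $|\ell\cap A|=q-1<q=r$ or $|\ell\cap A|=q+1$, i.e.\ $\ell\subseteq A$.

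It follows at once that $A^1=A$: the lines of the first type do not reach the infection threshold $r$, and the lines of the second type are already entirely contained in $A$, hence contribute no new points when declared infected. Therefore $\cl(A)=A\ne \mathcal{P}$, and $A$ is a non-percolating set of size $q^2-1=(q+1)(r-1)$, establishing the matching lower bound.

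The argument is essentially combinatorial bookkeeping on the hyperoval; there is no real obstacle beyond recording the 0-or-2 intersection property for $q$ even. The only place where I would be careful is making explicit that the $(q+1)$-point (fully infected) lines inside $A$ cannot help the percolation, since they add no points outside $A$.
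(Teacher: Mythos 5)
Your proof is correct and uses exactly the paper's construction: the complement of a hyperoval, together with the fact that in even order every line is a $0$- or $2$-secant of the hyperoval. You are in fact slightly more careful than the paper's own proof, which asserts that every line has exactly $2$ non-infected points and overlooks the external lines; your observation that those fully contained lines contribute nothing new closes that small gap.
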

\begin{proof}
Let $A$ be a complement of the hyperoval in $\Pi_q$, it contains $(q+1)(q-1)$ points, as required. Further, every line on the plane contains exactly 2 non-infected points, thus $A$ does not percolate.
\end{proof}
\begin{conjecture}
If $r\geq \frac{q}{2}+2$, then $M_r(\Pi_q)=(q+1)(r-1)$.
\end{conjecture}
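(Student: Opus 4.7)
Proposition \ref{gap} already provides the matching upper bound, so the task is to exhibit a non-percolating set $A\subseteq\mathcal{P}$ of size exactly $(q+1)(r-1)$ for every $r\ge q/2+2$. The first step is to reformulate the non-percolation constraint in terms of the complement $B:=\mathcal{P}\setminus A$: the set $A$ is non-percolating iff for every line $\ell$ one has $|\ell\cap A|\le r-1$ or $\ell\subseteq A$; equivalently $B$ is either disjoint from $\ell$ or meets it in at least $k:=q+2-r$ points, and in the extremal case $|B|=(q+1)(k-1)+1$. Under this dictionary the two proved cases become transparent: when $r=q$, $B$ is a hyperoval ($k=2$, every secant has exactly $2$ points of $B$), and when $r=q/2+2$, $A$ is the union of $q+2$ lines in ``dual hyperoval'' position, so every point of $A$ lies on exactly two of these lines.

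The main idea for the general range is to look for a \emph{$t$-regular line cover}: a family $\mathcal{L}_0$ of $s=t(r-1)$ lines such that every point of $\Pi_q$ lies on $0$ or $t$ of them. If such an $\mathcal{L}_0$ exists, then $A:=\bigcup\mathcal{L}_0$ has size $s(q+1)/t=(q+1)(r-1)$, every line outside $\mathcal{L}_0$ meets $A$ in exactly $s/t=r-1$ points, and every line in $\mathcal{L}_0$ is fully contained in $A$, so $A$ is non-percolating. By duality, a $t$-regular line cover of $\Pi_q$ is the same object as a maximal $(s,t)$-arc in $\overline{\Pi_q}$; Barlotti's theorem then forces $s=(t-1)q+t$, compatibly with $s=t(r-1)$ precisely when $r=q(t-1)/t+2$, and in particular when $k=q+2-r$ divides $q$. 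In $PG(2,2^h)$ the Denniston arcs supply $\mathcal{L}_0$ at every such $r$, and indeed both the hyperoval case ($t=q/2$) and the dual hyperoval case ($t=2$) fit into this framework. For the remaining values of $r$ one would have to relax regularity, either by allowing a small ``defect'' set of points covered an aberrant number of times and repairing the imbalance by adding or removing isolated points, or by gluing together several $t$-regular covers of compatible sizes along prescribed intersection patterns.

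The principal obstacle is that no single recipe is known to yield $(q+1)(r-1)$ uniformly for every $r\ge q/2+2$: each of the constructions above depends on a specific geometric object (hyperoval, Denniston arc, regular cover) whose existence is subject to arithmetic conditions on $q$ and $r$. For a fully generic $\Pi_q$ — which need not even contain an oval — the conjecture is genuinely open, and a complete proof will presumably require either a new family of almost-regular covers guaranteed by the sole hypothesis $r\ge q/2+2$, or an extremal argument that strictly sharpens Proposition \ref{gap} whenever no such cover is available. Accessible partial milestones are (i) settling the conjecture for all $r$ of the form $q(t-1)/t+2$ with $t\mid q$ in $PG(2,q)$, and (ii) refining Proposition \ref{gap} by a double-counting argument over pairs of uninfected points, in the spirit of the case analysis of Proposition \ref{Mexact}, so as to close the gap between the two bounds even when no regular construction is present.
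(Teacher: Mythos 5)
There is no proof of this statement to compare against: the paper states it explicitly as a conjecture and says the authors ``have been able to neither verify nor disprove'' it, so the honest assessment is that your text is a research plan, not a proof, and you say as much yourself. Within that frame, what you write is sound: the upper bound does come from Proposition \ref{gap}; your complementation dictionary (with $k=q+2-r$ and $|B|=q(k-1)+k$) is correct; the two proved cases do correspond to a hyperoval complement and a dual hyperoval; and your $t$-regular line cover construction does produce a closed non-percolating set of size $(q+1)(r-1)$ whenever it exists, with Barlotti correctly pinning $s=(t-1)q+t$ and hence $r=q-\frac{q}{t}+2$.

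The genuine gap is that this machinery cannot reach the full range $r\ge \frac{q}{2}+2$ even in principle, and the obstruction is harder than your closing paragraph suggests. First, the regular-cover route only ever addresses the values of $r$ with $(q+2-r)\mid q$, a sparse set. Second, and more seriously, by the theorem of Ball, Blokhuis and Mazzocca, nontrivial maximal arcs do not exist in $PG(2,q)$ for odd $q$, so for odd prime powers the Denniston-type input is unavailable for every intermediate $t$, not merely unconstructed; your milestone (i) is therefore vacuous outside characteristic $2$. Third, the ``relax regularity and repair by a small defect set'' step is exactly where a proof would have to live, and no mechanism is offered for why an almost-regular cover of the right size should exist under the sole hypothesis $r\ge\frac{q}{2}+2$, especially in a general (non-Desarguesian, possibly oval-free) plane $\Pi_q$. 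So the conjecture remains open after your proposal, just as it is in the paper; what you have is a correct verification of the known special cases plus a plausible but unproven programme.
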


%%%%%%%%%%%%%%%%%%%%%%%%%%%%%%%%%%%%%%%%%%%%%%%%%%%%%%%%%%%%%%%%%%%%%%%%%%%%%%
%						Minimal percolation time
%%%%%%%%%%%%%%%%%%%%%%%%%%%%%%%%%%%%%%%%%%%%%%%%%%%%%%%%%%%%%%%%%%%%%%%%%%%%%%
\section{Minimal percolation time}

Obviously, any percolating set $A$ can be extended to a percolating set $A'$ with percolation time $t_r(A')= 1$. Thus we are interested only in the minimal percolation time of inclusion minimal percolating sets. First we show that minimal percolating sets cannot fill up the entire plane within a single round:
\begin{proposition}
If $A$ percolates in one round, that is, $A^1=\mathcal{P}$, then $A$ cannot be minimal.
\end{proposition}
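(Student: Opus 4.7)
My plan is to exhibit a proper subset $A'\subsetneq A$ that still percolates, contradicting the minimality of $A$. Write $L=\{l\in\mathcal{L}:|l\cap A|\geq r\}$ for the set of lines infected in round~$1$; the hypothesis $A^1=\mathcal{P}$ is equivalent to $A\cup\bigcup L=\mathcal{P}$. I would split into cases depending on whether some line of $L$ is \emph{overfull}, meaning $|l\cap A|\geq r+1$.

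In the overfull case, I pick $P\in l_0\cap A$ for an overfull $l_0\in L$ and set $A'=A\setminus\{P\}$. Since $|l_0\cap A'|\geq r$, the line $l_0$ is infected in round~$1$ of the $A'$-process, so $l_0\subseteq(A')^1$ and in particular $P\in(A')^1$, giving $A\subseteq(A')^1$. Every $l\in L$ still satisfies $|l\cap(A')^1|\geq r$, hence $(A')^2\supseteq A\cup\bigcup L=\mathcal{P}$.

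Otherwise every $l\in L$ has $|l\cap A|=r$. If some $P\in A$ lies outside $\bigcup L$, removing $P$ affects no line of $L$, so $(A')^1\supseteq \bigcup L\cup A'=\mathcal{P}\setminus\{P\}$; in round~$2$ any line through $P$ then contains $q\geq r$ infected points and re-infects $P$. Otherwise $A\subseteq\bigcup L$, forcing $\bigcup L=\mathcal{P}$. Pick any $P\in A$, set $L_P=\{l\in L:P\in l\}$, and note that the points missing from $(A')^1$ form $\{P\}\cup M_P$, where $M_P$ consists of non-$A$ points whose unique incident line of $L$ passes through $P$. The crucial observation is that any line $\ell\ni P$ with $\ell\notin L$ satisfies $\ell\cap M_P=\emptyset$, because the unique line from $P$ to a point of $M_P$ must lie in $L_P$. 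Such an $\ell$ therefore has $q\geq r$ points in $(A')^1$, is infected in round~$2$, and re-infects $P$. Then each $l\in L_P$ collects at least $r$ infected points by round~$3$ (its $r-1$ points of $A\setminus\{P\}$, the newly re-infected $P$, and its non-$A$, non-$M_P$ points, each of which lies on some line of $L\setminus L_P$ and is already infected), so $l$ gets infected and re-infects the remaining points of $M_P$.

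The main obstacle is this last subcase, especially the boundary possibility that every line through $P$ already lies in $L$ (so no auxiliary $\ell$ exists). I would deal with this by first choosing $P$ to minimise the number of lines of $L$ through it, and by a short counting/divisibility argument ruling out the rigid remaining configurations in which every point of $A$ has this maximal incidence property.
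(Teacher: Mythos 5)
Your Case 1 and the first two subcases of Case 2 are correct, but the sub-subcase you yourself flag as ``the main obstacle'' is a genuine gap, and the plan you sketch for it would not work. Once $P$ is chosen to minimise the number of lines of $L$ through it, the bad situation ``every line through $P$ lies in $L$'' forces every line of the plane to meet $A$ in exactly $0$ or $r$ points; such sets are precisely the maximal arcs of degree $r$, and they \emph{do exist} (e.g.\ Denniston arcs in $PG(2,q)$ for $q$ even and $r\mid q$), so no counting or divisibility argument can ``rule out the rigid remaining configurations.'' You must instead show directly that such an $A$ is non-minimal. Fortunately your own machinery does this: in that configuration every point $R\notin A$ lies on exactly $|A|/r=q+1-q/r\ge 2$ lines of $L$, at most one of which passes through $P$, so $M_P=\emptyset$; hence $(A')^1=\mathcal{P}\setminus\{P\}$, and any line through $P$ carries $q\ge r$ infected points and re-infects $P$ in round $2$. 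With that paragraph supplied (and the hypothesis $r\le q$ made explicit, which you invoke every time you write ``$q\ge r$ infected points''), your proof closes.

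For comparison, the paper avoids your global case analysis on $L$ entirely by exploiting minimality from the start: if $A$ is minimal and $P\in A$, then $P\notin\cl(A\setminus\{P\})$, and a short argument (any point $Q$ of a line $l\ni P$ with $Q\notin\cl(A\setminus\{P\})$ can only have been infected by $l$ itself) shows that \emph{every} line through $P$ meets $A$ in exactly $r$ points. The traces of $A$ on any $r$ of these lines then form a set of $r(r-1)+1$ points whose first round produces an $r$-broom, hence a percolating proper subset of $A$ by Proposition \ref{r-broom} --- a contradiction. That route is shorter and needs no structural dichotomy on $L$; your route, once repaired, is more constructive in that it exhibits a single removable point in each structural case.
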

\begin{proof}
Assume on the contrary that $A\subset\mathcal{P}$ is minimal and percolates in one round. Let $P\in A$ be arbitrarily chosen and let $A'= A\setminus \{P\}$. Apparently, $P\not\in\cl(A')$, thus every line $l$ through $P$ contains at most $(r-1)$ additional points of $cl(A')$. 

Now choose an arbitrary line $l$ through $P$ and a $Q\in l$ with $Q\ne P$ and $Q\not\in\cl(A')$ (notice that this implies that $Q\notin A$). As $Q\in A^1$, we know that some line $l_1$ containing $Q$ got infected in the first round. Clearly $l_1=l$, as otherwise $l_1$ (and thus $Q$ too) would get infected also when starting with $A'$ instead of $A$. That implies that $l$ contains exactly $r$ points from $A$, including $P$.

We have concluded that every line through $P$ contains $r-1$ additional initially infected points. Observe that the set of points on any $r$ of these lines percolates, contradicting the minimality of $A$; this completes our proof.
\end{proof}

Observe that for $r=2$ all minimal percolating sets consist of 3 non-collinear points, thus $t_2(\Pi_q)=2$. Also, it is fairly easy to find a set of 6 points that percolates in two rounds for $r = 3$ (such a set must be minimal due to Proposition \ref{maxPercConst}), that means $t_3(\Pi_q)=2$ for all possible $q$. However we cannot prove that $t_r(\Pi_q)=2$ for higher values of $r$ thus state it as a conjecture at the end of this section. We show that a slightly weaker similar pattern holds for higher values of $r$. We note that if $r \le \sqrt{2q}$ then it is relatively easy to construct a minimal percolating set $A_r$ with $t_r(A_r)=3$, however we can prove the same when $r$ is linearly big in $q$:

\begin{figure}
\begin{center}
\includegraphics[width=8cm]{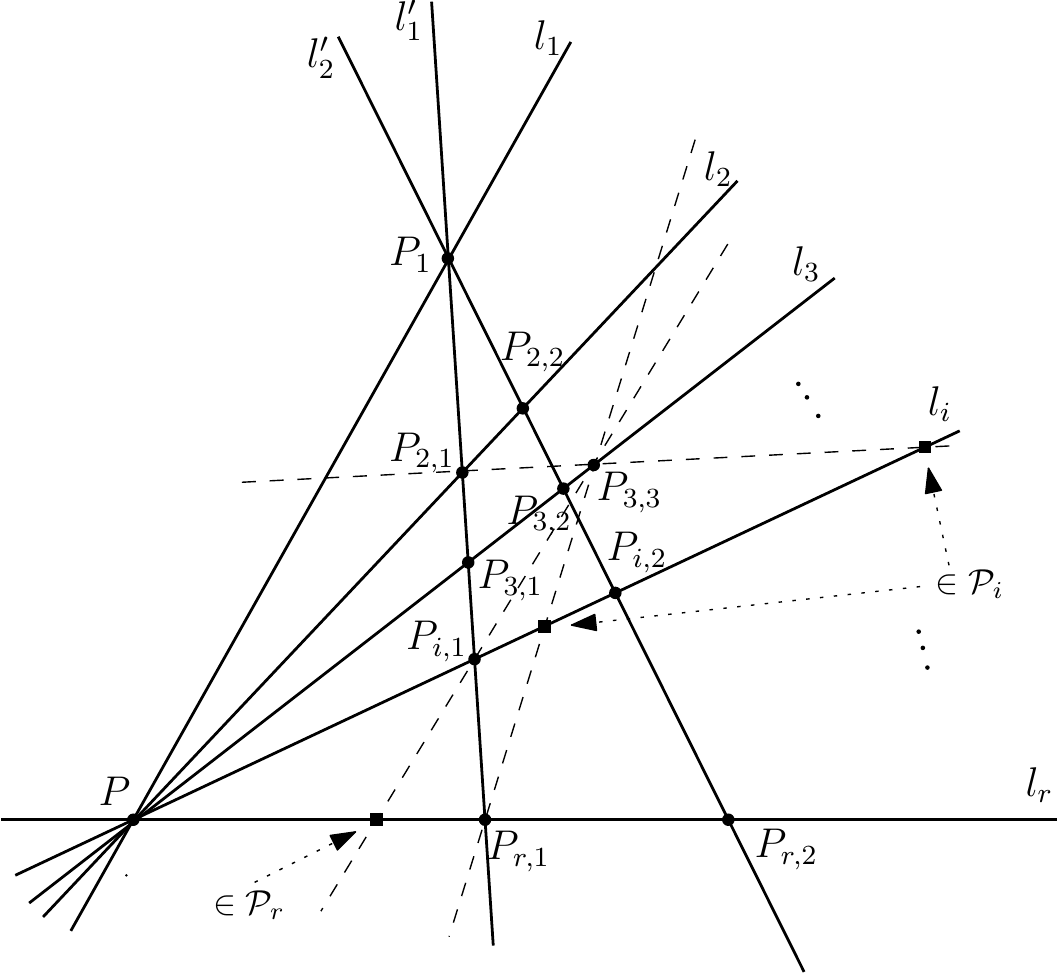}
\caption{A minimal set that percolates in $3$ rounds.}
\label{fig:constructionMinPerc}
\end{center}
\end{figure}

\begin{proposition}\label{existsminperc}
For every $4\leq r \leq \frac{q+7}{3}$ there exists a minimal percolating set $A_r$ such that $t_r(A_r)=3$.
\end{proposition}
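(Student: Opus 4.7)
The plan is to exhibit an explicit initial set $A_r$ supported on a small pencil of lines and verify by hand the three required properties: percolation, percolation time exactly $3$, and inclusion–minimality.

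The construction I would use is the following. Fix a point $P$ and $r$ lines $\ell_1,\dots,\ell_r$ through $P$. Put $P\notin A_r$; put exactly $r$ initial points of $A_r$ on $\ell_1\setminus\{P\}$; and put exactly $r-1$ initial points of $A_r$ on each $\ell_i\setminus\{P\}$ for $i\ge 2$. Choose these points in sufficiently general position that no line other than some $\ell_j$ contains $r$ of them. Since $|A_r|=r^2-r+1$ and the hypothesis $r\le\frac{q+7}{3}$ easily guarantees enough free points on each $\ell_i$ for such a generic choice, this is feasible.

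Checking $t_r(A_r)=3$ is then straightforward. In round $1$ only $\ell_1$ has $r$ initial points, so $\ell_1$ is the unique infected line (genericity rules out any other line reaching $r$ initial points). In round $2$, each $\ell_i$ for $i\ge 2$ picks up $P=\ell_i\cap\ell_1$, giving exactly $r$ infected points and triggering its infection; no other line reaches $r$ infected points either (another genericity check). At the end of round $2$ we have the full $r$-broom at $P$, so by Proposition \ref{r-broom} every remaining point is infected in round $3$. Finally, $t_r(A_r)\ge 3$ follows because after round $2$ only $r$ lines are infected, none of them covers all points of the plane, and the $r$-broom itself demonstrably does not fit inside $A_r^1$.

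The main obstacle is the minimality analysis, which splits into two cases. Removing $p\in A_r\cap\ell_1$ is the easy case: $\ell_1$ drops to $r-1$ initial points, and by genericity no line contains $r$ initial points of $A_r\setminus\{p\}$, so round $1$ is empty; hence nothing ever gets infected (or, equivalently, $A_r\setminus\{p\}$ is contained in the $r$ lines $\ell_1,\dots,\ell_r$ and never acquires a first infected line, so Proposition \ref{notpercolate} applies). The hard case is $p\in A_r\cap\ell_i$ for $i\ge 2$: now $\ell_i$ drops to $r-2$ initial points, only $\ell_1$ and the $\ell_j$ for $j\ne 1,i$ fire in rounds $1$ and $2$, and a naive count shows that the $r-2$ surviving initial points on $\ell_i$ could conceivably spark a secondary cascade in round $3$ that eventually refills $\ell_i$. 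To defeat this, one needs a refined placement of the initial points on each $\ell_i$—for instance, forcing them all to lie on a fixed auxiliary pencil through some second point $Q\notin\ell_1\cup\cdots\cup\ell_r$—so that every line which could be infected in round $3$ after the removal of $p$ meets $\ell_i$ in an already-initial point, and therefore contributes nothing new to $\ell_i$. This is the step where the numerical hypothesis $r\le\frac{q+7}{3}$ (equivalently $q\ge 3r-7$) is used: it guarantees the plane has enough distinct auxiliary points and lines for such a rigid placement. A careful bookkeeping argument on the round-3 infections, together with Proposition \ref{notpercolate} applied to the resulting system, then shows that $\ell_i$ (and other points) remain uninfected forever, completing the minimality check.
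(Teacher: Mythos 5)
Your construction fails inclusion-minimality, and the gap you flag in your hard case is not a bookkeeping issue that a ``refined placement'' can repair --- it is structural. Suppose you delete a point $p\in A_r\cap\ell_i$ with $i\ge 2$. The line $\ell_1$ still carries its $r$ initial points and fires in round~1, so $P$ becomes infected; each $\ell_j$ with $j\ne 1,i$ then has its $r-1$ initial points plus $P$ and fires in round~2. At that moment $r-1$ lines of the pencil through $P$ are \emph{fully} infected, and $\ell_i$ still carries $r-2\ge 2$ surviving initial points. Let $x$ be one of them. Every line $m\ne\ell_i$ through $x$ avoids $P$ (since $\overline{xP}=\ell_i$), hence meets the $r-1$ fully infected lines in $r-1$ \emph{distinct} infected points, and together with $x$ itself contains $r$ infected points; so all $q$ such lines fire in round~3, infecting every point of the plane off $\ell_i$. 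In round~4 every line through an uninfected point of $\ell_i$ carries $q\ge r$ infected points, so $\ell_i$ fills up and $A_r\setminus\{p\}$ percolates. Note that this argument uses nothing about \emph{where} on the lines $\ell_2,\dots,\ell_r$ your initial points sit --- only that $\ell_1$ has $r$ of them and each other line has $r-1$. Hence no auxiliary pencil through a second point $Q$ can rescue the construction; the point distribution itself must change. (A secondary issue: your genericity requirement --- no line outside the pencil containing $r$, resp.\ $r-1$, of the roughly $r^2$ placed points --- is not obviously available under the hypothesis $q\ge 3r-7$; a greedy choice needs to dodge on the order of $r^4$ forbidden positions per line.)

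The paper's construction is designed precisely to dodge this cascade: it puts $P$ \emph{into} $A_r$, places only $O(1)$ or $r-1$ carefully structured points on each broom line (two of them on auxiliary lines $l'_1,l'_2$ through a point $P_1$, the rest on lines through a distinguished point $P_{3,3}$), so that after deleting any single point the partially infected configuration stays inside $r-1$ lines apart from $P_{3,3}$, and every candidate line for a further infection is forced by the built-in collinearities (Claim~\ref{klaja}) to meet the infected set in at most $r-1$ points. That forced-coincidence mechanism is the key idea your proposal is missing.
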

\begin{proof}
Let $l_1,\dots,l_r$ be $r$ lines forming an $r$-broom with center $P$.
We construct the initially infected set $A_r$ as follows (see also Figure \ref{fig:constructionMinPerc}): let $P_1 \in l_1\setminus \{P\}$, and $P_{2,1},P_{2,2} \in l_2 \setminus \{P\}$ different. Let us denote $\overline{P_1P_{2,1}}$ by $l'_1$ and $\overline{P_1P_{2,2}}$ by $l'_2$. For any $3 \le i \le r$ let  $P_{i,1}=l_i\cap l'_1, P_{i,2}=l_i\cap l'_2$ and let $P_{3,3}\in l_3\setminus \{P,P_{3,1},P_{3,2}\}$ be arbitrary. For $4\le i\le r$ let 
$$\cP_i= \{l_i\cap \overline{P_{3,3}P_{j,1}}: 2\le j\le r, j\neq 3,j\neq i\}.$$
Note that $|\cP_i|=r-3$ for all $4\le i \le r$.

\vspace{2mm}

\noindent
Let $$A_r:=\{P,P_1,P_{3,3}\}\cup \{P_{i,1},P_{i,2}: 2\le i \le r\}\cup \bigcup_{i=4}^r\cP_i.$$

First we prove that $A_r$ percolates in three rounds. Observe that $$A_r^1 \supset \bigcup_{i=4}^{r} l_i\cup l'_1\cup l'_2 \cup \{P_{3,3}\}.$$
 We claim that $A_r^2$ contains an $r$-broom with center $P_{3,3}$. Any line $l$ with $P_{3,3}\in l$, $P,P_1 \notin l$ gets infected in round 2, if it intersects $l'_1$ and $l'_2$ outside $\bigcup_{i=4}^{r} l_i$. The number of such lines is at least $q+1-2-2(r-4)\ge r$ by the condition on $r$. Therefore $A_r^3=\cP$ by Proposition \ref{r-broom}.

\vspace{4mm}

The next claim shows that lines of special form cannot intersect certain subsets of $A_r$ in a big set. 

\begin{claim}\label{klaja}
For any line $l=\overline{P_{3,3},Q}$ with $Q\in (A_r\setminus \{P\}) \cap (\bigcup_{i=4}^rl_i)$, we have $$|l \cap (\bigcup_{i=3}^rl_i \cup l'_1 \cup l'_2)|\le r-1.$$
\end{claim}

\begin{proof}[Proof of Claim.]
As $\bigcup_{i=3}^rl_i \cup l'_1 \cup l'_2$ is a union of $r$ lines and $l$ is different from these lines, the intersection has size at most $r$. If the intersection was of size $r$, then $l$ would intersect all lines in $\{l_i: 3 \le i \le r \} \cup \{l'_1, l'_2\}$ in distinct points. But this is impossible as if for any $4 \le i \le r$ we have $Q=P_{i,1}$ (or $Q=P_{i,2}$), then $l\cap l'_1$ and $l\cap l_i$ ($l\cap l'_2$ and $l\cap l_i$, respectively) are the same. While if $Q\in \cP_i$, then $l\cap l'_1$ and $l\cap l_j$ are the same for some $4\le j\le r$ by the definition of $\cP_i$.
\end{proof}

Note that $A_r$ is contained in $r-1$ lines $l'_1,l'_2,l_4,l_5,\dots, l_r$ apart from $P_{3,3}$. Thus by the above claim in the first round only these $r-1$ lines can get infected, and in the second round only lines containing $P_{3,3}$ can get infected.
Now we want to prove that $A_r'=A_r\setminus \{R\}$ does not percolate for any $R\in A_r$. We proceed by a case analysis:

\vspace{2mm}

\textbf{Case 1:} $ \ R=P.$

\vspace{1mm}

In this case we have $A_r'^1=A_r'\cup l'_1 \cup l'_2$. This again shows that $A_r'^1$ is contained in $r-1$ lines $l'_1,l'_2,l_4,l_5,\dots, l_r$ apart from $P_{3,3}$. Thus any line that gets infected in the second round contains $P_{3,3}$ and intersects all these lines in infected points. In particular it intersects $l_4$ in an infected point $Q\in (A_r\setminus \{P\}) \cap (\bigcup_{i=4}^rl_i)$. Such lines contain at most $r-1$ infected points by Claim \ref{klaja}, which means that $A_r'$ does not percolate.

\vspace{2mm}

\textbf{Case 2:} $\ R\in \{P_1,P_{2,1},P_{2,2},P_{3,1},P_{3,2},P_{3,3}\}.$ 

\vspace{1mm}

We claim that $A_r \setminus \{R\}$ is contained in the union of $r-1$ lines thus by Proposition \ref{notpercolate} it cannot percolate. Indeed, $A_r\setminus \{P_1\}\subseteq \cup_{i=2}^rl_i$, $A_r\setminus \{P_{3,3}\}\subseteq l'_1\cup l'_2\cup \cup_{i=4}^rl_i$, $A_r\setminus \{P_{2,1}\} \subseteq l'_2\cup \cup_{i=3}^{r}l_i$, $A_r\setminus \{P_{2,2}\}\subseteq l'_1\cup \cup_{i=3}^{r}l_i$, $A_r\setminus \{P_{3,1}\}\subseteq l'_2\cup \overline{P_{3,3}P_{2,1}} \cup \cup_{i=4}^rl_i$,  and finally $A_r\setminus \{P_{3,2}\}\subseteq l'_1\cup \overline{P_{3,3}P_{2,2}} \cup \cup_{i=4}^rl_i$.

\vspace{2mm}

\textbf{Case 3:} $R\in (A\setminus \{P\}) \cap (\bigcup_{i=4}^r l_i$).

\vspace{1mm} In this case $R \in l_i \setminus \{P\}$ for some $4\le i \le r$, and $l_i$ does not get infected in the first round. $A_r'^1$ is contained in $r-1$ lines $l'_1,l'_2,l_4,l_5,\dots, l_r$ apart from $P_{3,3}$. Similarly to Case 1 any line that gets infected in the second round contains $P_{3,3}$ and intersects all these lines in infected points. In particular it intersects $l_i$ in an originally infected point $Q\in (A_r\setminus \{P\}) \cap (\bigcup_{i=4}^rl_i)$. Such lines contain at most $r-1$ infected points by Claim \ref{klaja}, which means that $A_r'$ does not percolate.

\end{proof}

\begin{conjecture}

For each $r$ with $4 \le r \le q$ there exists a minimal percolating set $A_r$ with $t_r(A_r) =	 2$.

\end{conjecture}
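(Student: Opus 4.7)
The plan is to construct, for each $r$ in the range, a minimal percolating set $A_r\subseteq\Pi_q$ of size $\binom{r+1}{2}$ that percolates in exactly two rounds, by picking $r+1$ lines in general position and letting $A_r$ be the set of all their pairwise intersections. Concretely, take $l_1,\ldots,l_{r+1}$ in general position in $\Pi_q$ and set $A_r=\{l_i\cap l_j:1\le i<j\le r+1\}$. Each line $l_i$ contains exactly $r$ points of $A_r$, namely its intersections with the remaining $l_j$'s, so every $l_i$ becomes infected in round~$1$. If $\binom{r}{2}\le q$, Proposition~\ref{r-lines} then yields $A_r^2=\mathcal{P}$, so $t_r(A_r)=2$; for larger $r$ one has to verify round two by hand, showing that every point off the union of the $l_i$'s has some line through it meeting the $r+1$ infected lines in at least $r$ distinct points.

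The minimality check is the heart of the argument. Remove an intersection $Q=l_i\cap l_j$ and set $A_r'=A_r\setminus\{Q\}$. Then $l_i$ and $l_j$ each carry only $r-1$ initially infected points, while the remaining $r-1$ lines $l_k$ with $k\notin\{i,j\}$ still carry $r$ and become infected in round~$1$. Crucially, every point of $A_r'$ lies on at least one of these $r-1$ infected lines: by general position an intersection $l_m\cap l_n$ lies on $l_k$ only when $k\in\{m,n\}$, so the only intersection missing every $l_k$ with $k\notin\{i,j\}$ is $l_i\cap l_j=Q$, which has been removed. Hence $A_r'^1=\bigcup_{k\notin\{i,j\}}l_k$. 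In round~$2$, for any line $l\notin\{l_1,\ldots,l_{r+1}\}$ we have $|l\cap A_r'^1|\le r-1$ (at most one point per infected line), and for $l\in\{l_i,l_j\}$ we still have $|l\cap A_r'^1|=r-1$, so no new line is infected. Therefore $A_r'^2=A_r'^1\ne\mathcal{P}$ and $A_r$ is minimal. The argument tacitly uses the genericity condition that no line outside $\{l_1,\ldots,l_{r+1}\}$ gets infected from $A_r$ in round~$1$; for $r\ge 3$ this is guaranteed by the stronger condition that no three of the intersection points are collinear.

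Three obstacles stand in the way of the full range $4\le r\le q$. First, one must exhibit $r+1$ lines in general position: Proposition~\ref{minarc} supplies them for $r+1\le\sqrt{2q}$ in any $\Pi_q$, and in $PG(2,q)$ ovals and hyperovals cover all $r\le q$. Second, the ``no three intersection points collinear'' genericity is itself an arc-type condition and forces $\binom{r+1}{2}$ to be at most the maximum arc size, squeezing the clean argument into $r=O(\sqrt{q})$; beyond that the condition must be weakened to the $(k,r-1)$-arc-type requirement that no line outside $\{l_1,\ldots,l_{r+1}\}$ contains $r$ of the intersections (which is consistent with Barlotti's bound but whose realisability has to be established), and this is expected to be the main combinatorial difficulty. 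Third, the range $\sqrt{2q}<r\le q$ in a non-Desarguesian plane is the most delicate: here $r+1$ lines in general position need not exist, and Proposition~\ref{r-lines} fails as well, so a genuinely different construction is required, plausibly one modelled on the broom-plus-auxiliary-points set used in the proof of Proposition~\ref{existsminperc} but re-engineered so that the percolation terminates in two rounds rather than three.
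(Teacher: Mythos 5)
This statement is an open conjecture in the paper: the authors give no proof of it, only the remark that $t_3(\Pi_q)=2$ is easy and that they ``cannot prove that $t_r(\Pi_q)=2$ for higher values of $r$.'' Your construction (the $\binom{r+1}{2}$ pairwise intersections of $r+1$ lines in general position) is a correct partial result, and for small $r$ it is cleaner than you make it out to be. Minimality is automatic: $|A_r|=\binom{r+1}{2}$ matches the universal lower bound of Proposition~\ref{maxPercConst}, so no proper subset can percolate and your entire second paragraph is unnecessary. Your ``genericity'' worry is also vacuous: if $l\notin\{l_1,\dots,l_{r+1}\}$, then distinct points of $A_r$ on $l$ are intersections of disjoint pairs of the $l_i$ (since $l$ meets each $l_i$ in a single point), so $l$ contains at most $\lfloor (r+1)/2\rfloor<r$ points of $A_r$ and can never be infected in round~1. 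Hence round~1 infects exactly $l_1,\dots,l_{r+1}$ (so $A_r^1\neq\mathcal{P}$), round~2 finishes by Proposition~\ref{r-lines}, and $t_r(A_r)=2$ --- provided $r+1$ lines in general position exist and $\binom{r}{2}\le q$.

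The genuine gap is the one you concede in your closing paragraph: the conjecture asserts the full range $4\le r\le q$, and your construction stops around $r\approx\sqrt{2q}$. Beyond that, (i) $r+1$ lines in general position need not exist in an arbitrary $\Pi_q$ (Proposition~\ref{minarc} guarantees them only up to roughly $\sqrt{2q}$, and their existence for larger $r$ in non-Desarguesian planes is open); and (ii) Proposition~\ref{r-lines} needs $\binom{r}{2}\le q$, and without it round~2 genuinely can fail: a point $P$ off the $r+1$ lines gets infected in round~2 only if some line through $P$ passes through at most one of the $\binom{r+1}{2}$ intersection points, which may be impossible once roughly $\binom{r+1}{2}/2$ exceeds $q+1$. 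You offer no construction for $\sqrt{2q}\lesssim r\le q$ beyond the suggestion that the three-round broom construction of Proposition~\ref{existsminperc} might be re-engineered, so the statement as posed is not proved. What you do have --- and it is worth isolating --- is a proof that a minimal (indeed minimum) percolating set with percolation time exactly $2$ exists whenever $\Pi_q$ contains $r+1$ lines in general position and $\binom{r}{2}\le q$, which already settles the conjecture for $4\le r\lesssim\sqrt{2q}$ and goes beyond what the paper records.
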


%%%%%%%%%%%%%%%%%%%%%%%%%%%%%%%%%%%%%%%%%%%%%%%%%%%%%%%%%%%%%%%%%%%%%%%%%%%%%%
%						Maximal percolation time
%%%%%%%%%%%%%%%%%%%%%%%%%%%%%%%%%%%%%%%%%%%%%%%%%%%%%%%%%%%%%%%%%%%%%%%%%%%%%%
\section{Maximal percolation time}

Obviously, $T_1(\Pi_q)= 1$ and $T_2(\Pi_q)=2$. It is fairly easy to see that $T_3(\pi_q)=3$ as there should be $3$ not collinear points outside  any line $l$ that was infected in the first round since otherwise the set would not percolate by Proposition \ref{notpercolate}. Then in the second round all 3 lines through these points are infected and by Proposition \ref{r-lines} (using that $\binom{3}{2}=3$) we are done. If $r=4$, then similar argument shows that $T_4(\Pi_q)=4$ for $q \ge 6(=\binom{4}{2})$. 

However for $r\geq 5$ the situation is changing a bit. 

\begin{proposition}\label{maxperctime}
If $r \ge 5$ and $\binom{r}{2}\leq q$,  then we have $T_r(\Pi_q)\geq r+1$.
\end{proposition}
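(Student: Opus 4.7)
The plan is to exhibit a percolating set $A \subseteq \Pi_q$ whose closure takes exactly $r+1$ rounds, forcing $T_r(\Pi_q) \ge r+1$. I will arrange $A$ so that $r$ specific lines $l_1,\dots,l_r$ become infected one per round, with $l_k$ firing in round $k$, after which Proposition \ref{r-lines} yields the final round $r+1$.

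Since $\binom{r}{2} \le q$, Proposition \ref{minarc} applied in the dual plane produces $r$ lines $l_1,\dots,l_r$ in general position; set $P_{ij} := l_i \cap l_j$. For each $i$ I select a subset $A_i \subseteq l_i \setminus \bigcup_{j \ne i} l_j$ with $|A_i| = r-i+1$ and put $A := \bigcup_i A_i$. The key observation is that at the start of round $k$ the line $l_k$ already carries exactly $r$ infected points: the $r-k+1$ points of $A_k$ itself together with the $k-1$ intersections $l_k \cap l_j$ for $j<k$, which lie on the already infected lines $l_1,\dots,l_{k-1}$. Hence $l_k$ fires in round $k$, provided no other line fires prematurely in rounds $1,\dots,r-1$.

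The only lines $m \notin \{l_1,\dots,l_r\}$ that could fire in round $k$ are those meeting $l_1 \cup \cdots \cup l_{k-1}$ in the full $k-1$ distinct points (so $m$ avoids every $P_{ij}$ with $i,j<k$) and passing through one point from each of $A_k,A_{k+1},\dots,A_r$ (a ``transversal''). I pick the $A_i$'s recursively from $i=r$ down to $i=1$, arranging two conditions. First, $A_{r-2}$ is chosen to miss the at most $|A_{r-1}|\cdot|A_r|=2$ points of $l_{r-2}$ that lie on the candidate lines through $A_{r-1}$ and the single point of $A_r$; this eliminates every transversal of $A_{r-2},A_{r-1},A_r$ and, by inclusion, every transversal of $A_k,\dots,A_r$ for each $k\le r-2$. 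Second, the (at most two) candidate transversals relevant to round $r-1$---the lines joining $A_r$'s unique point to the two points of $A_{r-1}$---are forced each to pass through some $P_{ij}$ with $i,j\le r-2$, so that they are disqualified by containing a coincidence point of $l_i\cap l_j$. This is where $r\ge 5$ enters: the $\binom{r-2}{2}$ lines through $A_r$'s point and the $P_{ij}$ with $i,j\le r-2$ cut $l_{r-1}$ in $\binom{r-2}{2}$ distinct points (they differ because the lines share only $A_r$'s point, which is off $l_{r-1}$), and $r\ge 5$ gives $\binom{r-2}{2}\ge 3 > |A_{r-1}|$, so the two elements of $A_{r-1}$ can be placed among these admissible positions while still avoiding the $r-1$ intersections $P_{r-1,j}$. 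The hypothesis $\binom{r}{2}\le q$ forces $q\ge 2r$ for $r\ge 5$, which supplies the room on each line for the remaining selections.

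Finally, I verify $A^r\ne \cP$. In round $r$ many lines do fire, namely every line through $A_r$'s point that avoids the intersections $\{P_{ij}:i,j\le r-1\}$; but the $\binom{r-1}{2}$ lines through $A_r$'s point that contain some such $P_{ij}$ meet $l_1\cup\cdots\cup l_{r-1}$ in only $r-2$ distinct points and so reach only $r-1$ infected points by the end of round $r-1$, leaving $q+1-(r-1)=q-r+2>0$ uninfected points on each of them at the end of round $r$. Consequently $A^r\subsetneq\cP$, while $A^r$ contains the $r$ fully infected lines $l_1,\dots,l_r$, so Proposition \ref{r-lines} gives $A^{r+1}=\cP$ and the percolation time of $A$ is exactly $r+1$. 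The main obstacle is the bookkeeping in the recursive selection, i.e., checking that all admissibility constraints can be met simultaneously; once that is in place, the inductive identity $A^k = A\cup l_1\cup\cdots\cup l_k$ for $0\le k\le r$ is routine.
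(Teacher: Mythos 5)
Your construction is essentially the one in the paper: $r$ lines in general position carrying nested sets of sizes $r, r-1, \dots, 1$, the only dangerous lines being the two lines $q_1=\overline{PQ_1}$, $q_2=\overline{PQ_2}$ joining the single point $P$ of $A_r$ to the two points of $A_{r-1}$, which you neutralize by routing them through intersection points $l_i\cap l_j$ with $i,j\le r-2$ and by keeping $A_{r-2}$ off $q_1\cup q_2$; the analysis of which lines can fire in each round and the final observation that $q_1,q_2$ remain incompletely infected after round $r$ also match the paper. However, there is a genuine gap in the step where you place the two points of $A_{r-1}$. You claim that the $\binom{r-2}{2}$ points $\overline{PP_{ij}}\cap l_{r-1}$ ($i<j\le r-2$) give at least $3>2$ admissible positions, but two obstructions are not accounted for: (i) for $r\ge 6$ the lines $\overline{PP_{ij}}$ need not be distinct ($P_{ij}$ and $P_{i'j'}$ with disjoint index pairs may be collinear with $P$), and, more seriously, (ii) each candidate position may land on a forbidden intersection point $P_{r-1,k}$, which your Property-2-type requirement excludes from $A_{r-1}$. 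The inequality $3>2$ does not survive subtracting these losses: already for $r=5$ there are only $3$ candidates and up to $3$ of them can be forbidden. This is not hypothetical: in a plane of even order, the three diagonals $\overline{P_{12}P_{34}}$, $\overline{P_{13}P_{24}}$, $\overline{P_{14}P_{23}}$ of the complete quadrilateral $l_1l_2l_3l_4$ are concurrent, and if $P$ is chosen at that diagonal point (with $l_5$ through it), then every candidate $\overline{PP_{ij}}\cap l_4$ equals some $P_{4,k}$ and no admissible position remains.

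The repair is easy but must be made explicit: the point $P=A_r$ cannot be taken arbitrarily. For instance, restrict attention to the two candidates $\overline{PP_{12}}$ and $\overline{PP_{13}}$ (which are always distinct since $P\notin l_1$); each of them meets $l_{r-1}$ in a forbidden point only for at most $r-4$ positions of $P$ on $l_r$, so since $q+1-(r-1)-2(r-4)>0$ under $\binom{r}{2}\le q$, a good $P$ exists — this is exactly what the paper's Property 3 ($l_1\cap l_2\in q_1$, $l_1\cap l_3\in q_2$) encodes. One further small slip: your inductive identity $A^k=A\cup l_1\cup\cdots\cup l_k$ is false for $k=r$, since in round $r$ every line through $P$ avoiding the points $P_{ij}$ ($i,j\le r-1$) also fires; you should assert it only for $k\le r-1$ and treat round $r$ separately, as your final paragraph in fact already does.
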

\begin{proof}
We construct a percolating set $A_r$ with percolation time at least $r+1$ as follows. Using that $r \ge 5$ we can choose $r$ lines, $l_1,l_2,...,l_r$ in general position, and point sets $A_{r,i} \subset l_i$ for $i=1,2,...,r$ with the following properties (see Figure \ref{fig:constructionAr}):

\vspace{2mm}

1. $|A_{r,i}|=r+1-i,$

\vspace{1mm}

2. for any $1\le i < j \le r$ we have $l_i \cap l_j \not \in \cup_{i=1}^r A_{r,i},$

\vspace{1mm}

3. (using the notation $A_{r,r}=\{P\}, \ A_{r,r-1}=\{Q_1,Q_2\}$ and $q_1=\overline{PQ_1}, q_2=\overline{PQ_2}$,) 

\hspace{4mm} $l_1 \cap l_2 \in q_1$, $l_1 \cap l_3 \in q_2$,

\vspace{1mm}

4. $A_{r,r-2} \cap (q_1 \cup q_2) = \emptyset$.

\vspace{2mm}

We can easily choose such sets using the conditions on $r$. Let $$A_r:=\bigcup_{i=1}^r A_{r,i}.$$ 

The next claim states that in the $j^{th}$ round ($j \le r-1$) only $l_i$ is the new infected line.

\begin{figure}
\begin{center}
\includegraphics[width=10cm]{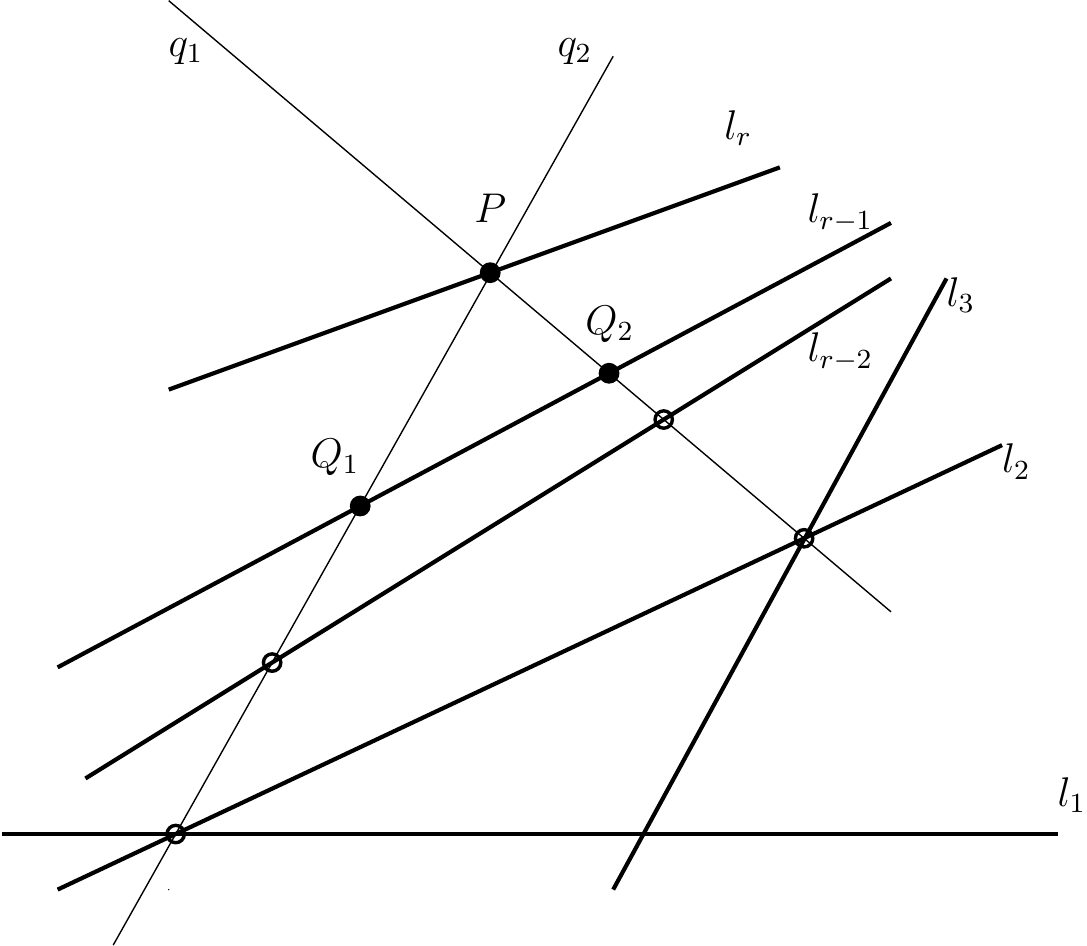}
\caption{The construction of $A_r$}
\label{fig:constructionAr}
\end{center}
\end{figure}

\begin{claim}
For $j\le r-1$ we have $$A_r^j=A_r \bigcup \cup_{i=1}^j l_i .$$

\end{claim}

\begin{proof}[Proof of the claim] We prove it by induction on $j$.

\vspace{1mm}

If $j=1$, then suppose that a line $l \neq l_1$ would be infected. $l$ can not be $l_i$ for $i \ge 2$ as $l_i$ contains only $r+1-i$ points at the beginning. Then, as $l_1,l_2,...,l_r$ are in general position and there is no infected point in their pairwise intersections by Property 2, $l$ should contain $1$ infected point from each $l_i$ ($1 \le i \le r$). So $l$ is either $q_1$ or $q_2$. However $|q_i \cap (\cup_{i=1}^r l_i)|=r-1$ ($i=1,2$) by Property 3, so they can not be infected in the first round.

\vspace{1mm}

For $2 \le j \le r-1$ we can apply a similar argument. By induction we know that only the points on $l_1,l_2,...,l_{j-1}$ are infected before the $j^{th}$ round (beside the initially infected points in $A_r$). So a line $l$ that is $l_i$ with $i \ge j+1$ can not be infected, since it contains $r+1-(j+1)$ initially infected points and at most $j-1$ that can come from an intersection with a so far infected line. So using that $j \le r-1$ we have again that $l$ should be either $q_1$ or $q_2$ which can not be as $|q_j \cap (\cup_{i=1}^r l_i)|=r-1$ ($j=1,2$) by Property 3.

\end{proof}

Note also that by similar argument that was used in the proof of the claim above any line that is infected in the $r^{th}$ round should contain $P$. from which these two things follow: 

\vspace{1mm}

1. $A_r^r$ contains an $r$-broom by the conditions on $r$, thus by Proposition \ref{r-broom} $A_r$ percolates.

\vspace{1mm}

2. As $|q_j \cap (\cup_{i=1}^r l_i)|=r-1$ ($j=1,2$) by Property 3, $q_j$ won't be infected in round $r$ and so $q_j\setminus \cup_{i=1}^r l_i$ is disjoint from  $A_r^r$. Thus $A^r_r \neq \Pi_q$.

\vspace{2mm}

By these we are done with the proof.
\end{proof}

\noindent
We also show this result is sharp for as long as $r$ is sufficiently small compared to $q$.

\begin{proposition}\label{maxpercequal}
If $\binom{r}{2}\leq q$  then $T_r(\Pi_q)\leq r+1$.
\end{proposition}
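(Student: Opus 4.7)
The plan is to combine two observations: first, every round of nontrivial growth produces at least one brand-new fully infected line; second, once $r$ fully infected lines are present, Proposition~\ref{r-lines} finishes percolation in a single additional round. Together these force the total percolation time to be at most $r+1$.

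Let $A\subseteq\mathcal P$ be any percolating set and set $t=t_r(A)$. If $t\le r$ we are immediately done, so assume $t\ge r+1$. By minimality of the percolation time we have strict containment \[A^0\subsetneq A^1\subsetneq\cdots\subsetneq A^{t}=\mathcal P,\] because as soon as $A^{j}=A^{j-1}$ the sequence stabilizes and $A^{j-1}$ would be the closure. The key claim is that for each $1\le j\le t$ there is a line $l_j\in\mathcal L$ with $l_j\subseteq A^{j}$ but $l_j\not\subseteq A^{j-1}$. To see this, pick any $P\in A^{j}\setminus A^{j-1}$; by the recursive definition of $A^{j}$ there is a line $l_j\ni P$ with $|l_j\cap A^{j-1}|\ge r$, so every point of $l_j$ lies in $A^{j}$, while $P\in l_j\setminus A^{j-1}$ shows $l_j\not\subseteq A^{j-1}$. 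In particular the lines $l_1,\dots,l_j$ are pairwise distinct, so after round $j$ at least $j$ lines are fully infected.

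Applying this with $j=r$ (which is available since $t\ge r+1>r$), we conclude that $A^{r}$ contains at least $r$ fully infected lines. Since $\binom{r}{2}\le q$ by hypothesis, Proposition~\ref{r-lines} yields $A^{r+1}=\mathcal P$. Hence $t\le r+1$, as desired.

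I do not expect any real obstacle here: the only subtle point is verifying that a new fully infected line is produced in \emph{every} round of strict growth, which is immediate from the definition of $A^{s}$, and the rest is a direct appeal to Proposition~\ref{r-lines}. The hypothesis $\binom{r}{2}\le q$ is used only to invoke that proposition.
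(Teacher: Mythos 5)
Your argument is correct and is exactly the approach the paper takes: the paper's proof is the one-line remark that every round produces at least one new infected line, so after $r$ rounds Proposition~\ref{r-lines} applies. You have simply filled in the details of that same observation.
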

\begin{proof}
As in every round at least one new infected line arises, this is a straightforward corollary of Proposition \ref{r-lines}. 
\end{proof}

Computation by exhaustive search for small values as well as by randomly selected infected sets for larger $q$ shows that for larger values of $r$ the percolation time can get reasonably bigger than $r+1$ (entries in bold are exact maximal values, while a non-bold entry $i$ means there exist a percolating set with percolation time $i$).
\begin{center}
$ 
\begin{array}{|l||c|c|c|c|c|c|c|c|c|c|c|c|c|c|c|c|}
\hline
q= &       3 & 3 & 5 & 5 & 5 & 7 & 7 & 7  & 11 & 11 & 11 & 13 & 17 & 19 \\
\hline
r= &       2 & 3 & 3 & 4 & 5 & 5 & 6 & 7  & 9  & 10 & 11 & 13 & 17 & 19 \\
\hline
T\ge &      {\bf 2} & {\bf 2} & {\bf 3} & {\bf 5} & {\bf 8} & 6 & 9 & \underline{14} & 10 & 15 & 21 & 23 & 24 & 27\\
\hline
\end{array}
$
\end{center}

Note that at this point it is believed but by no means justified  that the higher the infection rate $r$ the longer the slowest percolation lasts. We formulate this as a conjecture:

\begin{conjecture}
If $r_1<r_2$ then $T_{r_1}(\Pi_q)\leq T_{r_2}(\Pi_q)$.
\end{conjecture}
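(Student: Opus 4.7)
The plan is to reduce to the incremental case $r_2=r_1+1$ by an immediate induction on $r_2-r_1$, so it suffices to prove $T_r(\Pi_q)\le T_{r+1}(\Pi_q)$ for every $r\ge 1$. I would start by taking a percolating set $A$ at threshold $r$ realising $t_r(A)=T_r(\Pi_q)=t$, and record the round-by-round structure: let $\mathcal L_s$ denote the lines whose first infection in the $r$-process occurs in round $s$, so that $\mathcal L_1,\dots,\mathcal L_t$ partition $\mathcal L$ and each $\mathcal L_s$ is non-empty. The aim is to produce a set $B\supseteq A$ which percolates at threshold $r+1$ in at least $t$ rounds by forcing the $(r+1)$-process started from $B$ to mimic the $r$-process started from $A$ round by round.

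Writing $A^{s}_{r}$ and $B^{s}_{r+1}$ for the $s$th-round states of the two processes, the mimicry conditions to target are the containment $B^{s}_{r+1}\supseteq A^{s}_{r}$ for every $s$ (so every line of $\mathcal L_s$ is infected by round $s$ of the new process) together with the non-prematurity bound $|l\cap B^{s-1}_{r+1}|\le r$ for every $l\in\mathcal L_{s'}$ with $s'\ge s$ (so no line is infected ahead of its scheduled round). Together these force $t_{r+1}(B)\ge t$, hence the conjecture. In the regime $\binom{r}{2}\le q$ the desired inequality is already implicit in Propositions~\ref{maxperctime} and~\ref{maxpercequal}, which yield $T_r(\Pi_q)=r+1$, so the substantive content lies in the large-$r$ regime where the one-round delay need not be guaranteed.

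The construction of $X:=B\setminus A$ I would carry out round by round: for each line $l\in\mathcal L_s$ with $|l\cap A^{s-1}_{r}|=r$ exactly, one extra point of $l$ must be supplied from $A^{s-1}_{r}$, so that $l$ reaches the new threshold $r+1$ only in round $s$. The natural candidate is to choose the fortifying point inside $\bigcup_{s'<s}\mathcal L_{s'}\cap l$, and moreover outside the ``danger zone'' of lines $l'$ whose $A^{s-1}_{r}$-count is already $r-1$. If $A$ is sufficiently generic such a choice exists for each individual $l$, and one would then patch the choices together across $s=1,2,\dots,t$.

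The hard part, which I expect to be the real obstacle and is likely why the statement remains only a conjecture, is exactly the non-prematurity condition: adding a single fortifying point to $l\in\mathcal L_s$ can tip the balance on some $l'\in\mathcal L_{s'}$ with $s'>s$, pushing $|l'\cap B^{s-1}_{r+1}|$ up to $r+1$ and triggering a premature cascade that shaves rounds off the mimicking process. Controlling the fortification globally across all rounds simultaneously is delicate; my fallback would be either to first replace $A$ by an extremal set chosen with additional robustness (every candidate fortifying point lies on few upcoming near-threshold lines), or to deploy a probabilistic construction in which a random extra point is added on each critical line independently, then bound the expected number of premature infections and derandomise.
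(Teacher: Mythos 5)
This statement is left as an open conjecture in the paper --- the authors offer no proof of it --- so the question is whether your proposal closes it, and it does not. The reduction to $r_2=r_1+1$ is fine, and the overall plan (fortify an extremal set $A$ for threshold $r$ into a set $B\supseteq A$ whose $(r+1)$-process mimics the $r$-process round by round) is a sensible line of attack. But there is a concrete error in the fortification step: you propose to take the extra point of a critical line $l\in\mathcal{L}_s$ with $|l\cap A^{s-1}_r|=r$ from $\bigcup_{s'<s}\mathcal{L}_{s'}\cap l$, i.e.\ from the intersections of $l$ with lines infected in earlier rounds. Those points already belong to $A^{s-1}_r$, hence are already among the $r$ counted points of $l\cap A^{s-1}_r$; adding them to $B$ cannot raise $|l\cap B^{s-1}_{r+1}|$ past $r$. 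The fortifying point must be a genuinely new point of $l$, placed in the initial set $B^0$ --- and precisely because it is present from round $0$ it contributes to the intersection counts of \emph{every} line through it in \emph{every} round, which is what creates the premature-cascade problem. So the one step of the argument you treat as routine is in fact the same step you later identify as the obstacle; there is no ``easy local choice'' to patch together.

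Beyond that, the global issue you flag is genuinely unresolved and your fallbacks do not repair it: a single premature infection can collapse the entire schedule (one early line can trigger a chain reaction), so bounding the \emph{expected number} of premature infections is not enough --- you would need probability $o(1)$ of \emph{any} premature infection, and without structural information about the extremal set $A$ there is no reason the relevant events are rare. Note also that even the ``easy regime'' is not fully covered by Propositions \ref{maxperctime} and \ref{maxpercequal}: at the boundary where $\binom{r}{2}\le q<\binom{r+1}{2}$ you know $T_r(\Pi_q)=r+1$ but have no lower bound of $r+1$ on $T_{r+1}(\Pi_q)$, and the case $r\le 4$ needs separate treatment since Proposition \ref{maxperctime} assumes $r\ge 5$. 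In short, this remains a proof strategy with its central difficulty intact, not a proof.
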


The extremal case $r=q$ seems to be especially interesting as computation suggests that at least for certain values of $q$ (such as $q=7$) $T_q(\Pi_q)\geq 2q$ is attainable.  No reasonable upper bound on $T_r(\Pi_q)$ has been found. 

%%%%%%%%%%%%%%%%%%%%%%%%%%%%%%%%%%%%%%%%%%%%%%%%%%%%%%%%%%%%%%%%%%%%%%%%%%%%%
%						Critical probability of percolation
%%%%%%%%%%%%%%%%%%%%%%%%%%%%%%%%%%%%%%%%%%%%%%%%%%%%%%%%%%%%%%%%%%%%%%%%%%%%%%
\section{Critical probability of percolation}

In this section we investigate two random versions of line percolation in $\Pi_q$. We will consider the random subset $\Pi_q(p)$ of points in $\Pi_q$ where every point $P$ of $\Pi_q$ is an element of $\Pi_q(p)$ with probability $p$ independently of any other point of $\Pi_q$. We will determine the threshold function of $\Pi_q(p)$ percolating. Moreover, we obtain a stronger bottleneck result that states that whenever the random subset $\Pi_q(p)$ meets a line in at least $r$ vertices, then it percolates. Let $S_{\Pi_q}=(P_1,P_2,\dots,P_{q^2+q+1})$ be a random permutation of the points of $\Pi_q$ chosen uniformly among all $(q^2+q+1)!$ permutations. Let $\tau_r(\Pi_q)$ denote the random variable of the minimum index $i$ such that there exists a line $\ell \in \Pi_q$ that contains $r$ points from the first $i$ points of $S_{\Pi_q}$ and let $\tau_{perc,r}(\Pi_q)$ denote the random variable of the minimum index $i$ such that the set of the first $i$ points of $S_{\Pi_q}$ percolates. Obviously, $\tau_r(\Pi_q)\le \tau_{perc,r}(\Pi_q)$ holds for every instance of $S_{\Pi_q}$. We say that a sequence $\mathcal{E}_q$ of events holds \textit{with high probability} (w.h.p., in short) if $\mathbb{P}(\mathcal{E}_q)\rightarrow 1$ as $q$ tends to infinity. We will need an auxiliary third random model: $\Pi_q(m)$ is a random $m$-element point set in $\Pi_q$ chosen uniformly among all $\binom{q^2+q+1}{m}$ $m$-tuples. Note that the random set of the first $m$ points in $S_{\Pi_q}$ has the same probability distribution as $\Pi_q(m)$. The following proposition is well-known (see \cite{jlr} Corollary 1.16).

\begin{proposition}
\label{equiv} Let $\mathcal{Q}$ be a monotone property of point sets in $\Pi_q$. Then if $\Pi_q(p)$ possesses $\mathcal{Q}$ w.h.p., so does $\Pi_q(m)$ for $m=p(q^2+q+1)$. Also, $\mathbb{P}(\Pi_q(p) ~\text{posseses}\ \mathcal{Q})\rightarrow 0$ implies $\mathbb{P}(\Pi_q(m) ~\text{posseses}\ \mathcal{Q})\rightarrow 0$.
\end{proposition}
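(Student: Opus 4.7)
The plan is to couple the two random models by conditioning on size. Observe that, given $|\Pi_q(p)|=k$, the conditional distribution of $\Pi_q(p)$ is uniform over the $k$-element subsets of $\mathcal{P}$, hence it coincides with the distribution of $\Pi_q(k)$. Writing $N=q^2+q+1$, $X=|\Pi_q(p)|\sim\mathrm{Bin}(N,p)$, and $f(k)=\mathbb{P}(\Pi_q(k)\text{ possesses }\mathcal{Q})$, this yields the identity $\mathbb{P}(\Pi_q(p)\text{ possesses }\mathcal{Q})=\mathbb{E}[f(X)]$. Using the obvious coupling $\Pi_q(k)\subseteq\Pi_q(k+1)$ (add one uniformly random extra point), monotonicity of $\mathcal{Q}$ forces $f$ to be monotone in $k$; without loss of generality $\mathcal{Q}$ is monotone increasing, so $f$ is nondecreasing.

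I would then treat the two implications separately. Taking $m$ to be either $\lfloor pN\rfloor$ or $\lceil pN\rceil$ (the choice is immaterial), split the expectation at $m$ to obtain
\[
\mathbb{E}[f(X)]\le f(m)\,\mathbb{P}(X\le m)+\mathbb{P}(X>m)\quad\text{and}\quad\mathbb{E}[f(X)]\ge f(m)\,\mathbb{P}(X\ge m).
\]
The first inequality rearranges to $1-f(m)\le(1-\mathbb{E}[f(X)])/\mathbb{P}(X\le m)$, so $\mathbb{E}[f(X)]\to 1$ forces $f(m)\to 1$; the second gives $f(m)\le\mathbb{E}[f(X)]/\mathbb{P}(X\ge m)$, so $\mathbb{E}[f(X)]\to 0$ forces $f(m)\to 0$. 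In both cases the only nonformal input required is that $\mathbb{P}(X\le m)$ and $\mathbb{P}(X\ge m)$ remain bounded below by a positive constant as $q\to\infty$.

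This last ingredient is the classical fact that $\lfloor Np\rfloor$ lies within distance one of the median of $\mathrm{Bin}(N,p)$, so both $\mathbb{P}(X\le m)$ and $\mathbb{P}(X\ge m)$ are at least $\tfrac{1}{2}-o(1)$ as $q\to\infty$ (by the central limit theorem when $Np(1-p)\to\infty$, and by direct estimates or Poisson comparison in the remaining regimes). The main obstacle is thus merely packaging this routine-but-nontrivial binomial concentration fact; everything else is a formal conditioning argument. Since the entire package is precisely Corollary 1.16 of \cite{jlr}, in the paper it suffices to cite that reference rather than redo the calculation.
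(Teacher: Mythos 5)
Your proposal is correct, and it ends exactly where the paper does: the paper offers no proof at all, simply invoking Corollary 1.16 of \cite{jlr}, which is precisely the statement you reconstruct. Your conditioning-on-size argument (uniformity of $\Pi_q(p)$ given $|\Pi_q(p)|=k$, monotonicity of $f$, splitting $\mathbb{E}[f(X)]$ at $m$, and the fact that the binomial median is within one of $Np$) is the standard proof of that cited result, so it is a valid, self-contained substitute for the citation.
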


We are ready to state and prove our main results in this section.

\begin{thm}
Let $\omega=\omega(q)$ be an arbitrary positive function tending to infinity. Then for every fixed $r$ the following hold w.h.p.:

\vspace{2mm}

\noindent \textbf{(a)} If $p=q^{-\frac{r+2}{r}}/\omega$, then $\Pi_q(p)$ does not percolate.

\vspace{1mm}

\noindent \textbf{(b)} If $p=q^{-\frac{r+2}{r}}\omega$, then $\Pi_q(p)$ percolates.

\vspace{1mm}

\noindent \textbf{(c)} $\tau_r(\Pi_q)=\tau_{perc,r}(\Pi_q)$ holds for all $r\ge 3$.
\end{thm}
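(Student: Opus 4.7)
\emph{Part (a)} is a first-moment bound. If $X$ counts the lines of $\Pi_q$ meeting $\Pi_q(p)$ in at least $r$ points, then $\mathbb E[X]=(q^2+q+1)\binom{q+1}{r}p^r=\Theta(q^{r+2}p^r)$, which at $p=q^{-(r+2)/r}/\omega$ equals $\Theta(\omega^{-r})\to 0$. Markov's inequality gives $X=0$ w.h.p., so no line ever becomes infected and $\Pi_q(p)$ cannot percolate.

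For \emph{parts (b) and (c)} write $p_c:=q^{-(r+2)/r}$. I would prove (c) directly: a routine second-moment computation for $X$ at $p=p_c\omega$ yields $\tau_r\le p_c\omega(q^2+q+1)$ w.h.p., and (c) then upgrades this to $\tau_{perc,r}=\tau_r\le p_c\omega(q^2+q+1)$, i.e., (b). The heart of (c) is that at time $\tau_r$, in addition to the unique line $\ell$ just reaching $r$ initial points, the set $\{P_1,\dots,P_{\tau_r}\}$ already contains many lines with $r-1$ initial points, so infection explodes within two further rounds.

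The plan proceeds in three steps. \emph{(1) Localization:} first/second-moment estimates on $X$ place $\tau_r\in[p_c(q^2+q+1)/\omega',\,p_c(q^2+q+1)\omega']$ w.h.p., for some $\omega'\to\infty$ growing slowly enough. \emph{(2) Abundance:} in $\Pi_q(p_c/\omega')$, let $Y$ count the lines meeting the sample in $\ge r-1$ points. Then
$$\mathbb E[Y]=\Theta\bigl(q^{r+1}(p_c/\omega')^{r-1}\bigr)=\Theta\bigl(q^{2/r}/(\omega')^{r-1}\bigr)\to\infty$$
for any fixed $r\ge 3$, and since distinct lines share exactly one point the pair-correlations are of lower order, so Chebyshev yields $Y=(1+o(1))\mathbb E[Y]$ w.h.p. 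By Proposition \ref{equiv} and Step (1), $\Pi_q(p_c/\omega')\subseteq\{P_1,\dots,P_{\tau_r}\}$ w.h.p., so these $Y$ lines still carry $\ge r-1$ initial points at time $\tau_r$. \emph{(3) Endgame:} at most one of the $Y$ lines coincides with $\ell$, and a first-moment estimate bounds the expected number of them whose intersection with $\ell$ lies in the $r$-point set $\ell\cap\{P_1,\dots,P_{\tau_r}\}$ by $\Theta(q^rp^{r-1})=\Theta(q^{-(r-2)/r})\to 0$ for $r\ge 3$. Hence w.h.p.\ at least $r$ of the $Y$ lines survive as ``good'' near-infected lines $m\ne\ell$; once $\ell$ is fully infected in round $1$, each such $m$ attains $r$ infected points in round $2$, giving at least $r$ infected lines, and since $\binom{r}{2}\le q$ for fixed $r$ and large $q$, Proposition \ref{r-lines} infects the whole plane in round $3$. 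So $\tau_{perc,r}\le\tau_r$ w.h.p., proving (c).

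The \emph{main obstacle} is transferring the structural count to the random stopping time $\tau_r$, since $\ell$ is itself determined by the random sample. Proposition \ref{equiv} relocates the count to the deterministic threshold $p=p_c/\omega'$, after which only the arithmetic check that ``bad'' near-infected lines are negligible remains; this is exactly where the hypothesis $r\ge 3$ enters, ensuring simultaneously $Y\to\infty$ and a vanishing bad count.
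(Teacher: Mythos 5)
Your part (a) matches the paper's first-moment argument exactly, and your overall plan for (b)--(c) --- second-moment abundance of lines carrying $r-1$ sample points, infecting them in round two off the first fully loaded line $\ell$, then finishing with Proposition \ref{r-lines} --- is also the paper's strategy. The gap is in your Step (3). You bound the expected number of ``bad'' lines $m$ (those with $\ge r-1$ sample points whose intersection with $\ell$ lies in $\ell\cap A$) by $\Theta(q^r p^{r-1})$, which implicitly treats $\ell$ as a fixed line and charges probability $p$ for the event $m\cap\ell\in A$. But $\ell$ is by definition the first line to accumulate $r$ sample points, so conditionally each of the $r$ points of $\ell\cap A$ lies in the sample with probability $1$, and a bad line through one of them needs only $r-2$ further sample points. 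The honest count is $\Theta(r\cdot q\cdot q^{r-2}p^{r-2})=\Theta(q^{(4-r)/r})$, which does not tend to $0$ for $r=3$ (it is $\Theta(q^{1/3})$) or for $r=4$ (it is $\Theta(1)$); your claimed $\Theta(q^{-(r-2)/r})\to 0$ fails precisely in the regime you yourself single out as delicate. The conclusion is salvageable, since $\Theta(q^{(4-r)/r})=o(q^{2/r})=o(\mathbb{E}[Y])$ for all $r\ge 3$, but you must then show the bad count is $o(Y)$ w.h.p.\ \emph{conditionally on the identity of $\ell$ and of $\ell\cap A$}, and Proposition \ref{equiv} does not do that for you: it only transfers monotone properties between the binomial and uniform models; it does not decouple the sample from the random stopping line $\ell$.

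The paper resolves exactly this point by a different device: it proves a statement uniform over all points, namely that w.h.p.\ no point lies on more than $q^{1/r}$ lines containing at least $r-1$ sample points (the variable $Y_{r-1,q^{1/r}}$), so that whichever line turns out to be $\ell$, its $r$ sample points can spoil at most $rq^{1/r}=o(q^{2/r})$ of the $X_{r-1}=\Theta(q^{2/r})$ near-infected lines. That union bound over points only closes for $r\ge 4$, which is why the paper gives a separate elementary argument for $r=3$ (any line spanned by two sample points off $\ell$ meets $\ell$ in a third infected point and is infected by round two). To repair your Step (3) you need either this uniform bound or an explicit conditional moment argument; as written, the step fails.
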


\begin{proof}
For every line $\ell$ of $\Pi_q$, let us introduce the random indicator variable $X_{\ell,r}$ of the event $|\ell \cap \Pi_q(p)| \ge r$, and let us write $X_r=\sum_{\ell}X_{\ell,r}$ the number of lines in $\Pi_q$ that contain at least $r$ points from $\Pi_q(p)$. We will need the following estimates for the expected value of $X_r$:
\[
(q^2+q+1)\binom{q+1}{r}p^r(1-p)^{q+1-r}\le \mathbb{E}(X_r) \le (q^2+q+1)\binom{q+1}{r}p^r
\]

To prove \textbf{(a)} it is enough to prove $\Pi_q(p)$ does not meet any line in at least $r$ points (i.e. $X_r=0$) and therefore percolation does not even start w.h.p.. To this end it is enough to see that $\mathbb{E}(X_r) \rightarrow 0$, which holds as
\[
\mathbb{E}(X_r) \le (q^2+q+1)\binom{q+1}{r}p^r=O(q^{r+2})q^{-\frac{(r+2)r}{r}}/\omega^r=O(\omega^{-r}).
\]

During the proof of \textbf{(b)} we assume $\omega \le \log q$ and we show that if $p=q^{-\frac{r+2}{r}}\omega$, then $X_r>r$ holds w.h.p.. This would imply the statement as then Proposition \ref{r-lines} ensures that percolation happens in at most two rounds. 

The assumption $\omega \le \log q$ will be used to ensure $(1-p)^{q+1-r}\rightarrow 1$ as $q$ tends to infinity. Indeed, obviously $\omega \le \log q$ implies $p \rightarrow 0$ and therefore $1-p=(1+o(1))e^{-p}$. So $(1-p)^{q+1-r}\ge (1-p)^q=(1-o(1))e^{-qq^{-\frac{r+2}{r}}\omega}=(1-o(1))e^{-\omega/q^{2/r}}=1-o(1)$ as $\log q /q^{\frac{2}{r}}\rightarrow 0$. By the estimates from the beginning of the proof, this implies $\mathbb{E}(X_r)=(1-o(1)) (q^2+q+1)\binom{q+1}{r}p^r$.

By Chebyshev's inequality it is enough to see that $\mathbb{E}(X_r)\rightarrow \infty$ and $\sigma(X_r)=o(\mathbb{E}(X_r))$ hold. The former follows as 

\[
\mathbb{E}(X_r) =(1-o(1)) (q^2+q+1)\binom{q+1}{r}p^r=\Omega(q^{r+2})q^{-\frac{(r+2)r}{r}}\omega^r=\Omega(\omega^{r}).
\]
For the latter observe that for any different $\ell_1$ and $\ell_2$ we have:
\[
\mathbb{E}(X_r^2)=\mathbb{E}(X_r)+(q^2+q+1)(q^2+q)\mathbb{E}(X_{\ell_1,r}X_{\ell_2,r})\le \mathbb{E}(X_r)+(q^2+q+1)^2\mathbb{E}(X_{\ell_1,r}X_{\ell_2,r}).
\]
As $\mathbb{E}(X_{\ell_1,r}X_{\ell_2,r}) \le p\binom{q}{r-1}^2p^{2(r-1)}+\binom{q}{r}^2p^{2r}$ holds and $\mathbb{E}(X_r)\rightarrow \infty$ implies $\mathbb{E}(X_r)=o(\mathbb{E}^2(X_r))$, we have
\begin{equation*}
\begin{split}
\sigma(X_r)^2 & =\mathbb{E}(X_r^2)-\mathbb{E}^2(X_r)\\
& \le \mathbb{E}(X_r)+(q^2+q+1)^2\binom{q}{r-1}^2p^{2r-1}+(q^2+q+1)^2\binom{q}{r}^2p^{2r}-\\
& \hskip 1cm -(1-o(1))\left[(q^2+q+1)\binom{q+1}{r}p^r\right]^2\\
& \le (q^2+q+1)^2\binom{q}{r-1}^2p^{2r-1}+o(\mathbb{E}^2(X_r)) 
\\
& \le \frac{r^2}{pq^2}\mathbb{E}^2(X_r)+o(\mathbb{E}^2(X_r))=o(\mathbb{E}^2(X_r)),
\end{split}
\end{equation*}
as $pq^2=\omega q^{-\frac{r+2}{r}}q^2\rightarrow \infty$ since $r\ge 2$.

Finally, we prove \textbf{(c)}. Obviously, $\tau_r(\Pi_q)\le \tau_{perc,r}(\Pi_q)$ holds. By \textbf{(a)}, \textbf{(b)} and Proposition \ref{equiv} we know that $\tau_r(\Pi_q)=\Theta(q^{1-\frac{2}{r}})$ holds w.h.p.. At round 1 of the percolation starting from the set $A$ of the first $\tau_r(\Pi_q)$ points of $S_{\Pi_q}$, the (at least one) line $\ell$ that contains $r$ points from $A$ gets infected. In the second round all other lines $\ell'$ get infected for which $|\ell'\cap A|\ge r-1$ and $\ell'\cap \ell\notin A$. By Proposition \ref{r-lines}, it is enough to show that there are at least $r-1$ such lines. Let us consider the random variable $X_{r-1}$ for $p=\Theta(q^{-\frac{r+2}{r}})$. Using the estimates from the beginning of the proof, we obtain $\mathbb{E}(X_{r-1})=\Theta(q^{r+1}p^{r-1})=\Theta(q^{\frac{2}{r}})$. The calculation for the variance of $X_r$ (now applied to $X_{r-1}$) stays valid and thus we obtain that $X_{r-1}=\Theta(q^{\frac{2}{r}})$ w.h.p.. Let us introduce the random indicator variable $Y_{r,m,P}$ of the event that $$P\in \Pi_q(p) \wedge \exists \ell_1,\ell_2,\dots,\ell_m  \textrm{ with }  P\in \ell_i \ \textrm{ and } \ |\ell_i\cap \Pi_q(p)|\ge r$$ and let $Y_{r,m}=\sum_{P\in \Pi_q}Y_{r,m,P}$. We will bound $\mathbb{E}(Y_{r-1,q^{1/r}})$ using the well-known estimate $\binom{n}{k}\le (\frac{en}{k})^k$ and assuming $p=\Theta(q^{-\frac{r+2}{r}})$.
\[
\mathbb{E}(Y_{r-1,q^{1/r}})\le (q^2+q+1)\binom{q+1}{q^{1/r}}\binom{q}{r-2}^{q^{1/r}}p^{(r-2)q^{1/r}+1} \le 
\]
\[
2q^2p(eq^{1-1/r})^{q^{1/r}}(qp)^{(r-2)q^{1/r}}=O(q^2pe^{q^{1/r}}q^{q^{1/r}(1-1/r-2(r-2)/r)})
\]
If $r\ge 4$, then $1-1/r-2(r-2)/r<0$ holds, and therefore the above expected value tends to 0 and thus $Y_{r-1,q^{1/r}}=0$ w.h.p. This means that the points of $\ell \cap A$ may lie on at most $rq^{1/r}$ lines that contain at least $r-1$ points from $A$, and therefore there are $X_{r-1}-rq^{1/r}=\Theta(q^{\frac{2}{r}})$ lines that get infected in the second round of the percolation.

Finally let us consider the case $r=3$. As any pair of points of $\Pi_q$ defines a line and there is one line $\ell$ that contains exactly three points $P_1,P_2,P_3$ of $A$, any line $\ell'$ defined by $P',P''\in A\setminus \{P_1,P_2,P_3\}$ gets infected latest in the second round of percolation and then Proposition \ref{r-lines} can be used to deduce that $A$ percolates.
\end{proof}
%%%%%%%%%%%%%%%%%%%%%%%%%%%%%%%%%%%%%%%%%%%%%%%%%%%%%%%%%%%%%%%%%%%%%%%%%%%%%%%%%%%%%%%%%%%%%%%%
% % % % % % % % % % % % % % % % % % % % % % % % % % % % % % % % % % % % % % % % % % % % % % % % 
%%%%%%%%%%%%%%%%%%%%%%%%%%%%%%%%%%%%%%%%%%%%%%%%%%%%%%%%%%%%%%%%%%%%%%%%%%%%%%%%%%%%%%%%%%%%%%%%
% % % % % % % % % % % % % % % % % % % % % % % % % % % % % % % % % % % % % % % % % % % % % % % %
%%%%%%%%%%%%%%%%%%%%%%%%%%%%%%%%%%%%%%%%%%%%%%%%%%%%%%%%%%%%%%%%%%%%%%%%%%%%%%%%%%%%%%%%%%%%%%%%
\bibliographystyle{acm}
\bibliography{refs.bib}
\end{document}